\documentclass[a4paper, 11pt, oneside,reqno]{article}

\setlength{\parskip}{2ex}
\setlength{\parindent}{0pt}
\pagestyle{plain} 
\usepackage[utf8]{inputenc}
\usepackage[english]{babel}
\usepackage{caption}
\usepackage{float}
\usepackage{graphicx}
\usepackage{array}
\usepackage{bm} 
\usepackage{hyperref} 

\usepackage[left=1in, right=1in, top=1in, bottom=1in, includefoot, headheight=13.6pt]{geometry}
\usepackage{url}
\usepackage{datetime}
\usepackage{times}
\usepackage{xcolor}
\usepackage{amsmath,amssymb}
\usepackage{booktabs} 
\usepackage[titletoc,title]{appendix} 
\usepackage{geometry} 
\usepackage{ulem} 

\usepackage{setspace}
\usepackage{rotating} 
\usepackage{enumitem} 
\usepackage{changepage} 

\usepackage{cases} 

\usepackage[amsmath,thmmarks]{ntheorem}

\newtheorem{theorem}{Theorem}
\newtheorem{example}[theorem]{Example}
\newtheorem{remark}[theorem]{Remark}
\newtheorem{corollary}[theorem]{Corollary}
\newtheorem{lemma}[theorem]{Lemma}
\newtheorem{definition}[theorem]{Definition}

\newtheorem{proposition}[theorem]{Proposition}

\theoremstyle{nonumberplain}
\theorembodyfont{\normalfont}
\theoremsymbol{\ensuremath{\square}}
\newtheorem{proof}{Proof}

\makeatletter
\newcommand{\leqnomode}{\tagsleft@true}
\newcommand{\reqnomode}{\tagsleft@false}
\makeatother


%

\newdateformat{dateenglish}{\monthname[\THEMONTH]~\THEDAY,~\THEYEAR}
%
%
%

\DeclareMathOperator{\sgn}{sgn}

\DeclareMathOperator{\Var}{Var} 
\DeclareMathOperator{\Cov}{Cov}

\let\P\relax 
\DeclareMathOperator{\P}{\mathbb{P}} 
\DeclareMathOperator{\E}{\mathbb{E}}
\def\N{\mathbb{N}}
\def\R{\mathbb{R}}
\def\C{\mathbb{C}}
\def\Z{\mathbb{Z}}
\newcommand{\1}{\mbox{1\hspace{-0.28em}I}}
\newcommand\numberthis{\addtocounter{equation}{1}\tag{\theequation}}


\title{Joint FCLT for Sample Quantile and Measures of Dispersion \\for Functionals of Mixing Processes}

\author{\Large Marcel Br\"autigam$^{\,\ast}$ and Marie Kratz$^{\dagger}$\\[1ex]
\small $~^\ast$ CREAR, ESSEC Business School\\ 
\small $^{\dagger}$ ESSEC Business School, IDO dept \& CREAR, and Labex MME-DII, CY initiative}

\date{}

\parindent 0pt

\begin{document}
\maketitle

\begin{abstract}
\noindent In this paper, we establish a joint (bivariate) functional central limit theorem of the sample quantile and the $r$-th absolute centred sample moment for functionals of mixing processes. 
More precisely, we consider $L_2$-near epoch dependent processes that are functionals of either $\phi$-mixing or absolutely regular processes. 
The general results we obtain can be used for two classes of popular and important processes in applications: The class of augmented GARCH($p$,$q$) processes with independent and identically distributed innovations (including many GARCH variations used in practice) and the class of ARMA($p$,$q$) processes with mixing innovations (including, e.g., ARMA-GARCH processes).  
For selected examples, we provide exact conditions on the moments and parameters of the process for the joint asymptotics to hold.\\[5ex]

\noindent {\emph AMS classification}: 60F05; 60F17; 60G10; 62H10; 62H20\\
\noindent\textit{Keywords}: asymptotic distribution; Bahadur representation; functional central limit theorem; (near epoch) dependence; (augmented) GARCH; ARMA; 
\end{abstract}
 
\section{Introduction and framework}
\label{sec:Intro}

The focus of this paper is on the asymptotic theory of sample estimators of standard statistical quantities as rank, location and dispersion measures, for a very large class of widely used stationary processes. Such a theory is often needed for related statistical inference. The literature for sample quantiles has considerably developed (closely related to empirical processes) from the iid setting toward dependent random variables. In view of applications, it is also important to consider jointly the estimators of such standard statistical quantities. Fields of applications include dynamic systems (networks), (financial) econometrics, reliability, risk theory and management, quantitative finance, etc.
For instance, dispersion measures, as e.g. the second centred moment (named volatility in financial fields), are essential in economics for studying market fluctuations. This is why they are often introduced, via conditioning or jointly, in the econometric analysis of macroeconomic and financial data, to take into account the market state. 
Investigations on the relation between volatility estimators (a measure of dispersion) and Value-at-Risk (a quantile estimator) for time-series models are common in this field; see e.g. 
\cite{Brautigam22}, 
\cite{Giot04}, \cite{Jeon13}, \cite{Zumbach12}.

In the literature, the asymptotic behavior of each quantity has been investigated on its own, under some dependence structures. 
Limit theorems for sample quantiles have been proposed, inter alia, in the case of m-dependent or $\phi$-mixing processes (\cite{Sen68}, \cite{Sen72}), infinite-variance or non-linear processes (\cite{Hesse90}, \cite{Wu05}), and functionals of mixing processes (\cite{Wendler11}).
They are generally based on the use of the Bahadur representation developed in \cite{Bahadur66} for independent and identically distributed (iid) random variables (and refined by Kiefer in a sequence of papers), then extended (generally via coupling techniques) to linear and nonlinear processes, with various types of dependence (see e.g. \cite{Wu05,Wu21} and references therein).

For dispersion measures, there exist results for the mean absolute deviation (MAD) when considering strongly mixing processes (\cite{Segers14}), while the asymptotics of the sample variance can be found for various settings as examples in standard textbooks, see e.g. \cite{Anderson71}, 
\cite{Brockwell91}, \cite{VanDerVaart98}.
When considering more generally the autocovariance function of such estimators, we can mention, exemplarily, results for linear, bilinear or non-linear  processes with regularly varying noise (\cite{Davis98}, \cite{Davis99}), and for long-memory time series (e.g.~\cite{Horvath08} - and for a more complete overview~\cite{Kulik12} and references therein). 

In this paper, we establish the joint asymptotics of the sample quantile and  
the $r$-th absolute centred sample moment for functionals of strictly stationary processes $(Z_n)_{n \in \Z}$, namely
$$
X_n = f (Z_{n+k}, k \in \Z), \quad \text{with $f$ measurable,}
$$ 
assuming $(Z_n)$ $\phi$-mixing or absolutely regular, respectively, and $(X_n)$ near-epoch dependent. 
We briefly recall these dependence notions further below.
Our limit theorem extends existing results on joint asymptotics in the case of an identically and independently distributed (iid) sample (see~\cite{Brautigam22}). In particular, we establish an asymptotic representation for the $r$-th absolute centred sample moment for stationary, ergodic, short-memory processes under minimal conditions (extending the results in~\cite{Segers14} and \cite{Brautigam22}). 
Furthermore, our general results include two important popular classes of linear and nonlinear processes: The class of augmented GARCH($p$,$q$) processes with iid innovations and ARMA($p$,$q$) processes with mixing innovations.
The former, introduced by Duan in \cite{Duan97}, contains many well-known GARCH processes as special cases, of standard use, e.g. in quantitative finance and financial econometrics. Previous works on univariate CLTs and stationarity conditions for this class of GARCH processes are, among others, \cite{Aue06}, \cite{Berkes08}, \cite{Hormann08} and \cite{Lee14}. 
The class of ARMA processes includes the classical ARMA models with iid or white noise innovations (see e.g. \cite{Brockwell91}, \cite{Box15} for classical references on the topic), but also with mixing inovations. The earliest explicitly mentioned examples of ARMA processes with mixing innovations date back,  to our knowledge, to \cite{Weiss84}, \cite{Weiss86} considering ARMA models with ARCH errors. 
More widespreadly used in applications is the ARMA-GARCH model, which can be seen as an extension of the GARCH models to also have dynamics in the mean and not only in the variance process. First contributions on such types of processes can be found in \cite{Ling97}, \cite{Ling98}, \cite{Ling03}; more recent examples of applications are, e.g. \cite{Hoga19,Song18,Spierdijk16}. 
Analogously to the class of augmented GARCH processes, there exist results in the literature on FCLTs for ARMA processes with mixing innovations (or even larger classes of linear processes), as e.g. \cite{Davidson02}, \cite{Qiu11}.

Classical and broad notions for (asymptotically vanishing) dependence of a process are known under the name of mixing (see \cite{Bradley05} for an overview). 
Let us recall some notions of weak dependence related to the mixing coefficients. 
Let $(Y_n)_{n \in \Z}$ be a random process and denote the corresponding $\sigma$-algebra as $\mathcal{F}_{Y,s,t} = \sigma(Y_u; s \leq u  \leq t)$.
\begin{definition}[Mixing coefficients] \label{def:mixing_notions}
For any integer $n\geq 1$, let
\vspace{-2ex}
\begin{align*} 
\alpha(n) &:= \sup_{j \in \Z}\; \sup_{C \in \mathcal{F}_{Y,-\infty, j}, D \in \mathcal{F}_{Y,j+n, \infty}} \lvert P(C \cap D) - P(C)P(D) \rvert, 
\\ \beta(n) &:= \sup_{j \in \Z} \;\E[ \sup_{C \in \mathcal{F}_{Y,j+n, \infty} } \lvert P(C \vert \mathcal{F}_{Y,-\infty,j}) - P(C) \rvert], 
\\ \phi(n) &:= \sup_{j \in \Z} \;\sup_{C \in \mathcal{F}_{Y,-\infty,j} : P(C)>0, D \in \mathcal{F}_{Y,j+n, \infty}} \lvert P(D \vert C) - P(D) \rvert. 
\end{align*}
The process $(Y_n)_{n \in \Z}$ is called strongly mixing 
if $\displaystyle \alpha(n) \underset{n\to\infty}{\rightarrow}0$, $\displaystyle \beta$-mixing or absolutely regular
if $\beta(n) \underset{n\to\infty}{\rightarrow} 0$, and $\phi$-mixing 
if $\phi(n) \underset{n\to\infty}{\rightarrow} 0$.
\end{definition}
qNote that, for a stationary time series, we can omit the $\displaystyle \sup_{j \in \Z}$ in the definition of the mixing notions and simply set, w.l.o.g. , $j=0$. Also, $\phi$-mixing implies $\beta$-mixing that implies $\alpha$-mixing, but the converse does not hold in general.

A drawback of mixing is that, in general, a functional that depends on an infinite number of lags of a mixing process will not be mixing itself. This gave rise to the introduction of the notion of $L_p$-near-epoch dependence ($L_p$-NED) (that goes back to~\cite{Ibragimov62} in the case of $p=2$ for an underlying $\phi$-mixing process, even if not named NED there). It imposes additional structure on functionals of mixing processes enabling statistical inference for this larger class of processes. 
Alternatively, $L_p$-NED is also called $p$-approximation functional; see \cite{Borovkova01}, \cite{Wendler11}.
\begin{definition}[$L_p$-NED, \cite{Andrews88}]
For $p>0$, a stationary process $(X_n)_{n \in \Z}$ is called $L_p$-NED on a process $\left( Z_n\right)_{ n\in \Z }$ if, for $k \geq 0$,
\[ \| X_0 - \E[X_0 \vert \mathcal{F}_{Z,-k, +k}] \|_p \leq \nu(k), \]
for non-negative constants $\nu(k)$ such that $\nu(k) \rightarrow 0$ as $k \rightarrow \infty$.
\end{definition}
Note that, in the econometrics literature, a specific terminology is used for the rate of $\nu(k)$: If $\nu(k)= O(k^{-\tau -\epsilon})$ for some $\epsilon >0$, one calls the process $L_p$-NED of size $\left(-\tau\right)$ and, if $\nu(k)=O(e^{-\delta k})$ for some $\delta>0$, geometrically $L_p$-NED. 

Finally, let us present some technical conditions on 
the marginal distribution function $F_X$ of the stationary process $(X_n)_{n \in \Z}$ under study, and give the notation of the sample estimators for the two quantities of interest. 
We denote, whenever they exist, the probability density function (pdf) of $F_X$ by $f_X$, with mean $\mu$, variance $\sigma^2$, as well as, for any integer $r\geq 1$, the $r$-th absolute centred moment, $\mu(X,r) := \E[\lvert X_0 - \mu \rvert^r]$. The quantile of order $p$ of $F_X$ is defined as $q_X(p):= \inf \{ x \in \R: F_X(x) \geq p \}$.

We impose four different types of conditions on the marginal distribution $F_X$, as in the iid case (see \cite{Brautigam22}).
First, we assume the existence of its finite $2k$-th moment for any integer $k>0$.
Then, the continuity or $l$-fold differentiability of $F_{X}$ (at a given point or neighbourhood) for any integer $l> 0$, 
and the positivity of $f_X$ (at a given point or neighbourhood). Those conditions are named as:
\begin{align*}
&(M_k) &&\E[\lvert X_0 \rvert^{2k}] < \infty, 
\\ &(C_0) && F_X \text{~is continuous}, 
\\ \phantom{text to make the distance less} &(C_l^{~'}) &&F_X \text{~is~} l\text{-times differentiable,} \phantom{text to make the distance between \& and \& \& less} 
\\ &(C_1^{+}) &&f_X \text{~is positive.} 
\end{align*}
Given a sample $X_1,\cdots,X_n$, with order statistics $X_{(1)}\le \cdots\le X_{(n)}$, let $q_n (p)$ denote the sample quantile of order $p \in [0,1]$, namely 
$$ q_n (p) = X_{( \lceil np \rceil )},$$ 
where $\lceil x \rceil =   \min{ \{ m \in \mathbb{Z}  : m \geq x \} }$ and $[x]$ are the rounded-up integer-part and the nearest-integer of a real number $x \in \R$, respectively. 

The $r$-th absolute centred sample moment is denoted by $\hat{m}(X,n,r)$ and defined, for $r \in \N$, by
\begin{equation}\label{eq:sampleMDisp}
\hat{m}(X,n,r) := \frac{1}{n} \sum_{i=1}^n \lvert X_i -  \bar{X}_n \rvert^r,
\end{equation}
$\bar{X}_n$ representing the empirical mean.
Special cases of this latter estimator include the sample variance ($r=2$) 
and the sample mean absolute deviation around the sample mean ($r=1$).

Recall the standard notation $u^T$ for the transpose of a vector $u$ and $\displaystyle \sgn(x) := -\1_{(x<0)}+\1_{(x>0)}$ for the signum function.
By $\lvert \cdot \rvert$, we denote the euclidean norm, and the usual $L_p$-norm is denoted by $\| \cdot \|_p := \E^{1/p}[ \lvert \cdot \rvert^p]$.
Moreover the notations $\overset{d}\rightarrow$, $\overset{a.s.}\rightarrow$, $\overset{P}\rightarrow$ and $\overset{D_d[0,1]}\rightarrow$ correspond to the convergence in distribution, almost surely, in probability and in distribution of a random vector in the d-dimensional Skorohod space $D_d[0,1]$. Further, for real-valued functions $f$ and $g$, we write $f(x) = O(g(x))$ (as $x \rightarrow \infty)$ if and only if there exists a positive constant $M$ and a real number $x_0$ s.t. $\lvert f(x) \rvert \leq M g(x)$ for all $x \geq x_0$, and $f(x)=o(g(x))$ (as $x \rightarrow \infty$) if, for all $\epsilon>0$, there exists a real number $x_0$ s.t., for all $x \geq x_0$, $\lvert f(x) \rvert \leq \epsilon g(x)$. Analogously, for a sequence of rv's $X_n$ and constants $a_n$, we denote by $X_n = o_P(a_n)$ the convergence in probability to 0 of $X_n/a_n$.

The structure of the paper is as follows. We present in Section~\ref{sec:asympt_results} the main results on the bivariate FCLT for the sample quantile and the $r$-th absolute centred sample moment for functionals of $\phi$-mixing or absolutely regular processes. 
In Section~\ref{sec:exp}, we apply our general results to the family of augmented GARCH($p$, $q$) processes and ARMA($p$,$q$) processes with mixing innovations. 
Section~\ref{sec:aux_res} gathers auxiliary results needed for the proofs of the main results, but also of interest on their own: the asymptotic representation of the $r$-th absolute centred sample moment and results on $L_2$-NED of bounded and unbounded functionals. 
Finally, the proofs are given in Section~\ref{sec:proofs}. 

\section{The Bivariate FCLT}
\label{sec:asympt_results}

Let us state the main result. To ease its presentation, we introduce a trivariate normal random vector $(U, V, W)^T$, functional of a random process $X$, with mean zero and the following covariance matrix:
{\small
\begin{equation*}
    (Co)~\left\{\begin{aligned}
\Var(U) &= \Var(X_0) +2 \sum_{i=1}^{\infty} \Cov(X_i, X_0)
\\ \Var(V) &= \Var(\lvert X_0 \rvert^r) +2 \sum_{i=1}^{\infty}  \Cov(\lvert X_i\rvert^r, \lvert X_0 \rvert^r) 
\\ \Var(W) &= \Var \left( \frac{p- \1_{(X_0 \leq q_X(p))}}{f_X(q_X(p))} \right) + 2 \sum_{i=1}^{\infty} \Cov \left( \frac{p- \1_{(X_i \leq q_X(p))}}{f_X(q_X(p))}, \frac{p- \1_{(X_0 \leq q_X(p))}}{f_X(q_X(p))} \right)
\\ &= \frac{p(1-p)}{f_X^2(q_X(p))} + \frac{2}{f_X^2(q_X(p))} \sum_{i=1}^{\infty} \left( \E[\1_{(X_0 \leq q_X(p))} \1_{(X_i \leq q_X(p))} ] -p^2  \right) \notag
\\ \Cov(U,V) &= \sum_{i \in \mathbb{Z}} \Cov(\lvert X_i \rvert^r,X_0) = \sum_{i \in \Z} \Cov(\lvert X_0  \rvert^r, X_i  )
\\ \Cov(U,W) &= \frac{-1}{f_X(q_X(p))} \sum_{i \in \mathbb{Z}} \Cov( \1_{(X_i \leq q_X(p))},X_0) = \frac{-\sum_{i \in \Z} \Cov( \1_{(X_0 \leq q_X(p))},X_i )}{f_X(q_X(p))}
\\ 
\Cov(V,W) &= \frac{-1}{f_X(q_X(p))} \sum_{i \in \mathbb{Z}} \Cov(  \lvert X_0 \rvert^r, \1_{(X_i \leq q_X(p))}) =\frac{-\sum_{i \in \Z} \Cov(\lvert X_i \rvert^r, \1_{(X_0 \leq q_X(p))} )}{f_X(q_X(p))}
 \end{aligned}
    \right.
\end{equation*}
}

\begin{theorem}[bivariate FCLT] \label{thm:biv_FCLT}
Consider a process $(X_n)_{n \in \Z}$ that can be represented as a  functional $X_n=f (Z_{n+k}, k \in \Z)$ of a strictly stationary process $(Z_n)_{n \in \Z}$ and introduce the random vector 
$$ 
T_{n,r}(X) = \begin{pmatrix} q_n(p) -q_X(p) \\ \hat{m}(X,n,r) -m(X,r) \end{pmatrix},\; \text{with integer } r>0. 
$$
Assume for the marginals that the conditions $(M_r)$, $(C_1^{+})$ at $q_X(p)$ and $(C_2^{~'})$ in a neighbourhood of $q_X(p)$ and, for $r=1$, $(C_0)$ at $\mu$, are satisfied. 

In terms of dependence, suppose that: \\[-5ex]
\begin{itemize}
\item[$(D1)$] $(X_n)_{n \in \Z}$ is $L_2$-NED with constants $\nu(k) =  O(k^{-s}), s>6$, on a $\phi$-mixing process $(Z_n)_{n \in \Z}$ with mixing coefficient $\beta(n)=O(n^{-\kappa})$, for some $\kappa >3$.
\item[(D2)] For $r >1$, $\lvert X_n^{r} \rvert$ is $L_2$-NED with constants $\nu(k) =  O(k^{-\gamma})$, for $\gamma >2$. 
\end{itemize}
Then, we have, for $t \in [0,1]$, 
\[  \sqrt{n}~t ~ T_{[nt],r}(X) \overset{D_2[0,1]}{\underset{n \rightarrow \infty}{\rightarrow}}  \textbf{W}_{\Gamma^{(r)}} (t), \]
where $(\textbf{W}_{\Gamma^{(r)}}(t))_{t \in [0,1]}$ is the 2-dimensional Brownian motion with covariance matrix $\Gamma^{(r)} \in \R^{2\times 2}$ defined, for any $(s,t) \in [0,1]^2$, by $\Cov(\textbf{W}_{\Gamma^{(r)}}(t),\textbf{W}_{\Gamma^{(r)}}(s)) = \min(s,t) \Gamma^{(r)}$, where
\begin{align*}
\Gamma_{11}^{(r)}&= \Var(W),
\\ \Gamma_{22}^{(r)} &= r^2 \E[ X_0^{r-1} \sgn(X_0)^r]^2 \Var(U) + \Var(V) - 2r \E[ X_0^{r-1} \sgn(X_0)^r] \Cov(U,V),
\\  \Gamma_{12}^{(r)}&= \Gamma_{21}^{(r)} = -r\E[ X_0^{r-1} \sgn(X_0)^r] \Cov(U,W) + \Cov(V,W),
\end{align*}
$(U, V, W)^T$ being the trivariate normal vector (functional of $X$) with mean zero and covariance given in $(Co)$, all series being absolute convergent.
\end{theorem}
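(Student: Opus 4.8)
Since both $T_{[nt],r}(X)$ and the limiting covariance structure are invariant under a translation of $(X_n)$ (the estimator $\hat m$ depends only on the differences $X_i-\bar X_n$, and the quantile enters through a difference), I may assume without loss of generality that $\mu=0$. The plan is then to linearise each coordinate of $T_{[nt],r}(X)$ into a partial sum of an $L_2$-NED sequence, to prove a trivariate FCLT for those partial sums, and to recover the stated bivariate limit by a linear map. For the sample quantile, the marginal conditions $(C_1^+)$ at $q_X(p)$ and $(C_2')$ in a neighbourhood of $q_X(p)$, together with the dependence assumption $(D1)$, give a Bahadur representation
\[ q_{[nt]}(p)-q_X(p)=\frac{1}{[nt]\,f_X(q_X(p))}\sum_{i=1}^{[nt]}\bigl(p-\1_{(X_i\le q_X(p))}\bigr)+R^{(1)}_n(t), \]
with $\sup_{t\in[0,1]}\sqrt n\,t\,|R^{(1)}_n(t)|=o_P(1)$; the rates $s>6$ and $\kappa>3$ are precisely what is needed to push the remainder below order $n^{-1/2}$ uniformly in $t$. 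Using $[nt]\sim nt$, multiplication by $\sqrt n\,t$ turns the leading term into $n^{-1/2}\sum_{i=1}^{[nt]}W_i$, with $W_i:=\bigl(p-\1_{(X_i\le q_X(p))}\bigr)/f_X(q_X(p))$.

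For the $r$-th absolute centred sample moment I would invoke the asymptotic representation established in Section~\ref{sec:aux_res}. A first-order expansion of $|X_i-\bar X_n|^r$ in $\bar X_n$ (the non-differentiability at the origin for $r=1$ being absorbed by condition $(C_0)$ at $\mu=0$) yields
\[ \hat m(X,[nt],r)-m(X,r)=\frac{1}{[nt]}\sum_{i=1}^{[nt]}\bigl(|X_i|^r-m(X,r)-r\,c_r\,X_i\bigr)+R^{(2)}_n(t), \]
where $c_r:=\E[X_0^{r-1}\sgn(X_0)^r]$ is the almost-sure limit of $\tfrac1n\sum_i|X_i|^{r-1}\sgn(X_i)$ produced by the expansion, and again $\sup_{t}\sqrt n\,t\,|R^{(2)}_n(t)|=o_P(1)$; condition $(M_r)$ and, for $r>1$, the assumption $(D2)$ that $|X_n|^r$ is $L_2$-NED both control the remainder and ensure the relevant long-run variances are finite. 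Setting $U_i:=X_i$ and $V_i:=|X_i|^r-m(X,r)$, the leading term becomes $n^{-1/2}\sum_{i=1}^{[nt]}(V_i-r c_r U_i)$ after multiplication by $\sqrt n\,t$.

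It then remains to establish a trivariate FCLT for $n^{-1/2}\sum_{i=1}^{[nt]}(U_i,V_i,W_i)^T$. Using the $L_2$-NED inheritance lemmas of Section~\ref{sec:aux_res}, I would verify that each coordinate is $L_2$-NED on $(Z_n)$ with a summable rate: $U_i$ directly from $(D1)$; $V_i$ from $(D1)$ via Lipschitzness of $x\mapsto|x|$ when $r=1$ and from $(D2)$ when $r>1$; and the indicator $W_i$ from $(D1)$ combined with the differentiability and positivity of $F_X$ near $q_X(p)$. The multivariate FCLT for $L_2$-NED functionals of $\phi$-mixing (respectively absolutely regular) processes recalled in Appendix~\ref{appendix:fclt}, together with the Cram\'er--Wold device and a tightness argument, then gives convergence in $D_3[0,1]$ to a $3$-dimensional Brownian motion $\mathbf W_{(Co)}$ with covariance matrix $(Co)$, all defining series being absolutely convergent under the stated rates. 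I expect the main obstacle to be exactly this reduction step: controlling the uniform-in-$t$ Bahadur remainder $R^{(1)}_n$ under combined NED-and-mixing dependence, and proving the new moment representation; once both coordinates are linearised, the FCLT for the partial sums is an application of the cited results.

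Finally, I would apply the bounded linear map $A\colon\R^3\to\R^2$, $A(u,v,w)^T=(w,\,v-r c_r u)^T$, to the trivariate limit. By the continuous mapping theorem,
\[ \sqrt n\,t\,T_{[nt],r}(X)\overset{D_2[0,1]}{\underset{n\to\infty}{\longrightarrow}}A\,\mathbf W_{(Co)}=\mathbf W_{\Gamma^{(r)}},\qquad \Gamma^{(r)}=A\,(Co)\,A^{T}, \]
and a direct computation of $A\,(Co)\,A^{T}$ returns $\Gamma_{11}^{(r)}=\Var(W)$, $\Gamma_{22}^{(r)}=r^2 c_r^2\Var(U)+\Var(V)-2r c_r\Cov(U,V)$ and $\Gamma_{12}^{(r)}=\Cov(V,W)-r c_r\Cov(U,W)$, which coincide with the covariance entries in the statement.
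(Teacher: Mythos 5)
Your proposal is correct and follows essentially the same route as the paper: Bahadur representation for the quantile (via the NED/mixing rates in $(D1)$), the asymptotic representation of $\hat m(X,n,r)$ from Section~\ref{sec:aux_res}, a trivariate FCLT for the partial sums of $(X_i,\,|X_i|^r-m(X,r),\,(p-\1_{(X_i\le q_X(p))})/f_X(q_X(p)))$ checked through the $L_2$-NED inheritance lemmas, and finally the continuous mapping theorem with the linear map $(u,v,w)\mapsto(w,\,v-rc_ru)$ to obtain $\Gamma^{(r)}=A\,(Co)\,A^{T}$. The only notable (and harmless) differences are the WLOG reduction to $\mu=0$ and your explicit flagging of the uniform-in-$t$ control of the Bahadur remainder, which the paper handles somewhat more tersely via Slutsky.
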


\begin{remark}\label{rmk:rmk_after_thm}~
\begin{enumerate}
\item The conditions of Theorem~\ref{thm:biv_FCLT} are those required to establish a univariate CLT for each component of $T_{[nt],r}(X) $, namely the sample quantile and the $r$-th absolute centred sample moment. For the latter statistic, it requires first establishing a suitable representation, a new result on its own, presented in Proposition~\ref{prop:Abs_central_mom_asympt_garch}. Requiring $(C_2^{~'})$ and $(C_1^{+})$ in a neighbourhood of $q_X(p)$ corresponds to the conditions for the CLT of the sample quantile of a stationary process, which is $L_1$-NED with constants $\nu(k) =O(k^{-(\beta+3)})$ on an absolutely regular process (thus also for a $\phi$-mixing process) with mixing rate $O(n^{-\beta})$, for $\beta >3$ - see Corollary~1 in \cite{Wendler11}. \\[1ex]
Further, the $L_2$-NED with constants $\nu(k) =O(k^{-(\beta+3)})$ on a $\phi$-mixing process with mixing rate $O(n^{-\beta})$, for $\beta >3$, together with $(M_r)$ and the additional assumptions in the cases of $r=1$ ($(C_0)$ at $\mu$) and $r>1$ ($L_2$-NED with constants $\nu(k) =O(k^{-\gamma}), \gamma >2$, on $(\lvert X_n \rvert^r)_{n \in \Z}$), are sufficient conditions for the univariate CLT of the $r$-th centred sample moment (using Theorem 1.2 in \cite{Davidson02}, which is a special case of Theorem~3.1 in \cite{DeJong00}).
\item Note that we have a trade-off between more restrictive mixing conditions on the underlying process $(Z_n)_{n \in \Z}$ and more restrictive moment conditions on $X$. We make this explicit in Corollary~\ref{cor:biv_FCLT_abs_mixing}. 
\item For $r>1$, there can be an additional trade-off: In Corollary~\ref{cor:biv_FCLT_Lq_NED} we show that sufficient conditions for the $L_2$-NED of $\lvert X_n \rvert$ are a balance between more restrictive moment conditions than $(M_r)$ and less restrictive NED conditions on $(X_n)_{n \in \Z}$ (but more than $L_2$-NED).
\item The setting of Theorem~\ref{thm:biv_FCLT} includes, as special cases, the case of an underlying iid process $(Z_n)_{n \in \Z}$ as well as a causal representation of $(X_n)_{n \in \Z}$ with respect to $(Z_n)_{n \in \Z}$ - as used in many econometric time-series models; see Section~\ref{sec:exp} for examples.
\end{enumerate}
\end{remark}
\vspace*{-0.3cm}
The result of Theorem~\ref{thm:biv_FCLT} holds also true if we consider an underlying absolutely regular process $(Z_n)_{n \in \Z}$, and slightly adapt the conditions:
\begin{corollary} \label{cor:biv_FCLT_abs_mixing}
Assume the setting and conditions of Theorem~\ref{thm:biv_FCLT} with the following two changes:
\begin{itemize}
\item Take $\beta= \min{(s-3, \kappa)}$ and assume that the moment condition $(M_{r \frac{\beta+\epsilon }{\beta-1}})$ holds for an $\epsilon>0$, instead of $(M_r)$.
\item (D1$^*$) (instead of (D1)): The process $X$ is $L_2$-NED with constants $\nu(k) =  O(k^{-s}), s>6$, on an absolutely regular process $(Z_n)_{n \in \Z}$ (instead of $\phi$-mixing) with the same order of mixing coefficient, i.e. $\beta(n)=O(n^{-\kappa})$ for some $\kappa >3$.\\[-5ex]
\end{itemize}
Then, the FCLT holds for $T_{n,r}(X)$ as in Theorem~\ref{thm:biv_FCLT}. 
\end{corollary}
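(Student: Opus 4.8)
The plan is to re-run the proof of Theorem~\ref{thm:biv_FCLT} almost verbatim, isolating the single place where the $\phi$-mixing assumption on $(Z_n)_{n\in\Z}$ is actually used and replacing it by its absolutely regular analogue. Recall from Remark~\ref{rmk:rmk_after_thm} that the argument splits into three ingredients: the CLT (and Bahadur representation) for the sample quantile via \cite{Wendler11}; the CLT for the $r$-th absolute centred sample moment via the representation of Proposition~\ref{prop:Abs_central_mom_asympt_garch} together with \cite{Davidson02}/\cite{DeJong00}; and a joint (Cram\'er--Wold plus tightness) step lifting these to a bivariate statement in $D_2[0,1]$. The underlying mixing mode enters only through the covariance and moment inequalities used to bound partial sums of NED functionals and to guarantee absolute convergence of the series in $(Co)$. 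Since $\phi$-mixing is strictly stronger than absolute regularity, weakening $(D1)$ to $(D1^*)$ costs exactly the sharpness of those inequalities, and this loss is precisely what the strengthened moment condition $(M_{r(\beta+\epsilon)/(\beta-1)})$ is designed to offset.

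The quantile component transfers for free. Corollary~1 in \cite{Wendler11}, on which that part rests, is already stated for a process that is $L_1$-NED on an \emph{absolutely regular} underlying process; its hypotheses require an effective rate exceeding $3$, which is met here because $\beta=\min(s-3,\kappa)>3$ (as $s>6$ and $\kappa>3$) and $s\ge\beta+3$, so the $L_2$-NED rate $O(k^{-s})$ dominates the required $O(k^{-(\beta+3)})$; and because the relevant summands are bounded indicators $\1_{(X_i\le q_X(p))}$, no extra moments are needed. The work therefore concentrates on the absolute-moment component. Here I would replace the $\phi$-mixing covariance inequality (which controls $\Cov(|X_0|^r,|X_i|^r)$ using only an $L_2$ norm, hence $(M_r)$) by the Davydov--Rio covariance inequality for strongly mixing sequences: since absolute regularity implies strong mixing with $\alpha(n)\le\beta(n)=O(n^{-\kappa})$, one gets a bound of the form $|\Cov(|X_0|^r,|X_i|^r)|\le C\,\alpha(i)^{1-2/p}\,\||X_0|^r\|_p^2$ for $p>2$, applied through the NED approximation as in \cite{DeJong00}. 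Summability of $\sum_i |\Cov(|X_0|^r,|X_i|^r)|$ then demands $\beta(1-2/p)>1$, with $\kappa$ replaced by the effective rate $\beta$ absorbing the NED approximation, i.e.\ $p>2\beta/(\beta-1)$; taking $p=2(\beta+\epsilon)/(\beta-1)$ and using $\||X_0|^r\|_p=\|X_0\|_{rp}^r$ yields exactly $\E|X_0|^{2r(\beta+\epsilon)/(\beta-1)}<\infty$, namely $(M_{r(\beta+\epsilon)/(\beta-1)})$, the $\epsilon>0$ placing $p$ just above the summability threshold. The same substitution re-validates the moment bounds on the partial sums feeding the De Jong--Davidson CLT.

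Finally, the joint and functional upgrades carry over without further change: the multivariate NED CLT of \cite{DeJong00} is itself formulated for $\alpha$-mixing (hence absolutely regular) underlying processes, so the Cram\'er--Wold combination of the two asymptotic representations and the tightness estimate in $D_2[0,1]$ rely only on the covariance and moment bounds just re-established. The limiting matrix $\Gamma^{(r)}$ and the series in $(Co)$ are literally the same expressions as in Theorem~\ref{thm:biv_FCLT}; only their absolute convergence is now certified through the absolutely regular inequalities. The main obstacle I anticipate is bookkeeping rather than conceptual: pinning down the exact exponent $r(\beta+\epsilon)/(\beta-1)$ requires tracking how the $L_2$-NED approximation of the \emph{unbounded} functional $|X_n|^r$ interacts with the strong-mixing covariance bound (this is where condition $(D2)$ and the NED-of-functionals results of Section~\ref{sec:aux_res} enter), and verifying that the single effective rate $\beta=\min(s-3,\kappa)$ simultaneously keeps every covariance series summable and every tightness bound finite across both components.
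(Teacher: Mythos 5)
Your proposal is correct and follows essentially the same route as the paper: the quantile part transfers for free since Wendler's Bahadur representation already covers absolutely regular processes, and the moment--mixing trade-off is absorbed by switching to the $\alpha$-mixing branch of the De Jong--Davidson FCLT together with a Hölder exponent chosen just above the summability threshold. Your direct Davydov-inequality computation with $p=2(\beta+\epsilon)/(\beta-1)$ is exactly the unpacked version of what the paper does by invoking Ibragimov--Linnik's CLT with $\delta=2(1+\epsilon)/(\beta-1)$ (note $2+\delta=2(\beta+\epsilon)/(\beta-1)$) for the short-memory/ergodicity check and then taking $s=2(\beta+\epsilon)/(\beta-1)$ in the multivariate FCLT, so the exponent $r(\beta+\epsilon)/(\beta-1)$ arises identically in both arguments.
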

Moreover, we can provide sufficient conditions to reduce the $L_2$-NED of $(\lvert X_n \rvert^r)_{n \in \Z}$ (condition (D2) in Theorem~\ref{thm:biv_FCLT} and Corollary~\ref{cor:biv_FCLT_abs_mixing} respectively) to $L_q$-NED of the process $(X_n)_{n \in \Z}$ itself, as follows:
\begin{corollary} \label{cor:biv_FCLT_Lq_NED}
Assume the setting and conditions of Theorem~\ref{thm:biv_FCLT} ($(Z_n)_{n \in \Z}$ $\phi$-mixing) or Corollary~\ref{cor:biv_FCLT_abs_mixing} ($(Z_n)_{n \in \Z}$ absolutely regular), replacing (D2) with the following assumptions: For any choice of $p,q \in (1, \infty)$ s.t. $\frac{1}{p}+\frac{1}{q} = 1$, suppose that
\begin{itemize}
\item $\E[ \lvert X_0 \rvert^{p(2r-1)} ] < \infty$,
\item $(X_n)_{n \in \Z}$ is $L_q$-NED with constants $\nu(k)$ of the same order as in Theorem~\ref{thm:biv_FCLT} or Corollary~\ref{cor:biv_FCLT_abs_mixing}, i.e. $O(k^{-s}),s>6$. 
\end{itemize}
Then, the FCLT holds for $T_{n,r}(X)$ as in Theorem~\ref{thm:biv_FCLT} or Corollary~\ref{cor:biv_FCLT_abs_mixing}, respectively.  
\end{corollary}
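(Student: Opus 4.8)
The plan is to show that the two replacement assumptions together imply condition (D2) of Theorem~\ref{thm:biv_FCLT}, namely that $(|X_n|^r)_{n\in\Z}$ is $L_2$-NED on $(Z_n)_{n\in\Z}$ with constants of order $O(k^{-\gamma})$ for some $\gamma>2$. Once this is established, every remaining hypothesis of Theorem~\ref{thm:biv_FCLT} (respectively Corollary~\ref{cor:biv_FCLT_abs_mixing}) is retained unchanged, so the FCLT follows at once by invoking that statement. Note that the retained moment condition $(M_r)$ guarantees $|X_0|^r\in L_2$, so all quantities below are well defined, and the only genuine content is the passage from $L_q$-NED of $(X_n)$ to $L_2$-NED of $(|X_n|^r)$.

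First I would exploit the $L_2$-optimality of the conditional expectation. Writing $\widehat X_0^{(k)}:=\E[X_0\mid\mathcal F_{Z,-k,+k}]$, the variable $|\widehat X_0^{(k)}|^r$ is $\mathcal F_{Z,-k,+k}$-measurable, whence
\[
\big\| |X_0|^r-\E[|X_0|^r\mid\mathcal F_{Z,-k,+k}]\big\|_2
\;\le\; \big\| |X_0|^r-|\widehat X_0^{(k)}|^r\big\|_2 .
\]
This reduces the task to estimating the right-hand side in terms of the NED constants of $(X_n)$ itself.

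The key elementary estimate is the mean-value-type bound $\big||X_0|^r-|\widehat X_0^{(k)}|^r\big|\le r\,\max(|X_0|,|\widehat X_0^{(k)}|)^{r-1}\,|X_0-\widehat X_0^{(k)}|$, valid for integer $r\ge1$. Squaring it and using $|X_0-\widehat X_0^{(k)}|\le 2\max(|X_0|,|\widehat X_0^{(k)}|)$ to absorb one factor yields
\[
\big||X_0|^r-|\widehat X_0^{(k)}|^r\big|^2\;\le\; 2r^2\,\max\!\big(|X_0|,|\widehat X_0^{(k)}|\big)^{2r-1}\,\big|X_0-\widehat X_0^{(k)}\big| .
\]
I would then apply H\"older's inequality with the conjugate exponents $p,q$ from the hypothesis: the factor $\max(|X_0|,|\widehat X_0^{(k)}|)^{2r-1}$ is controlled in $L_p$ by $\E[|X_0|^{p(2r-1)}]^{1/p}$, finite thanks to the moment assumption and, for the $\widehat X_0^{(k)}$ part, conditional Jensen, with both bounds uniform in $k$; while the factor $|X_0-\widehat X_0^{(k)}|$ is controlled in $L_q$ by $\|X_0-\widehat X_0^{(k)}\|_q\le\nu(k)=O(k^{-s})$, which is precisely the assumed $L_q$-NED property of $(X_n)$.

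Combining these estimates gives $\big\||X_0|^r-\E[|X_0|^r\mid\mathcal F_{Z,-k,+k}]\big\|_2=O(k^{-s/2})$, so $(|X_n|^r)$ is $L_2$-NED with rate $O(k^{-s/2})$; since $s>6$ we have $s/2>3>2$, and (D2) holds with any $\gamma\in(2,s/2]$. The proof then concludes by applying Theorem~\ref{thm:biv_FCLT} (or Corollary~\ref{cor:biv_FCLT_abs_mixing}). The delicate point is the precise H\"older split, engineered so that the moment exponent matches exactly the assumed $p(2r-1)$ and the difference enters only through the $L_q$-norm dictated by the NED hypothesis; the remaining work — making the elementary inequality and the conditional-Jensen moment bound rigorous uniformly in $k$, and checking that the rate $s/2$ still clears the threshold $2$ demanded by (D2) — is routine bookkeeping given $s>6$.
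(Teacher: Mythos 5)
Your proposal is correct and follows essentially the same route as the paper: the paper's proof of Corollary~\ref{cor:biv_FCLT_Lq_NED} defers entirely to Corollary~\ref{cor:NED_for_polys} (via Lemma~\ref{lemma:NED_for_polys_general}), which likewise uses the $L_2$-minimality of the conditional expectation, a mean-value/convexity bound that reduces the squared difference to $\max(\lvert X_0\rvert,\lvert\widehat X_0^{(k)}\rvert)^{2r-1}\,\lvert X_0-\widehat X_0^{(k)}\rvert$, and a H\"older split with exponents $(p,q)$ yielding the moment condition $\E[\lvert X_0\rvert^{p(2r-1)}]<\infty$ and the rate $O(\nu(k)^{1/2})=O(k^{-s/2})$, which clears the threshold $\gamma>2$ since $s>6$. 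Your direct max-based estimate for the power function replaces the paper's slightly more roundabout passage through $f^2$ and the general convexity hypotheses, but the decomposition and all key steps coincide.
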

Note the following two extreme cases for choices of $p$ and $q$ in Corollary~\ref{cor:biv_FCLT_Lq_NED}: Choosing $p=q=2$, we get the most restrictive moment condition ($\E[ \lvert X_0 \rvert^{4r-2}] < \infty$) in combination with $L_2$-NED of $(X_n)_{n \in \Z}$, while, when choosing $p=\frac{2r}{2r-1}$, we get the least restrictive moment condition ($\E[ \lvert X_0 \rvert^{2r}] < \infty$) but needing $L_{2r}$-NED of $(X_n)_{n \in \Z}$ at the same time. Only for $r=1$, these two cases coincide. 

Let us now turn to two special cases of Theorem~\ref{thm:biv_FCLT} presented in Corollaries~\ref{cor:biv_CLT} and~\ref{cor:biv_CLT_iid} and given for sake of completeness. Choosing $t=1$ in Theorem~\ref{thm:biv_FCLT} provides the usual CLT, stated in Corollary~\ref{cor:biv_CLT}:
\begin{corollary} \label{cor:biv_CLT} (CLT)
Under the same conditions as in Theorem~\ref{thm:biv_FCLT},
the joint behaviour of the sample quantile $q_n(p)$ (for $p \in (0,1)$) and the $r$-th absolute centred sample moment $\hat{m}(X,n,r)$, is asymptotically bivariate normal:
\begin{equation} 
\sqrt{n} \, \begin{pmatrix} q_n (p) - q_X(p) \\ \hat{m}(X,n,r)  - m(X,r) \end{pmatrix} \; \underset{n\to\infty}{\overset{d}{\longrightarrow}} \; \mathcal{N}(0, \Gamma^{(r)}), 
\end{equation}
where the asymptotic covariance matrix $\displaystyle \Gamma^{(r)}=(\Gamma_{ij}^{(r)}, 1\le i,j\le 2)$ is defined in Theorem~\ref{thm:biv_FCLT}.
\end{corollary}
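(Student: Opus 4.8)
The plan is to obtain the stated CLT directly from the functional statement of Theorem~\ref{thm:biv_FCLT} by reading off its value at the endpoint $t=1$, so essentially no new estimates are required. The theorem gives the weak convergence $\sqrt{n}\,t\,T_{[nt],r}(X) \overset{D_2[0,1]}{\rightarrow} \textbf{W}_{\Gamma^{(r)}}(t)$ in the Skorohod space $D_2[0,1]$. The first step is therefore to specialise both the index and the multiplicative prefactor at $t=1$: since $[n\cdot 1]=n$ and the scalar factor $t$ equals $1$, the left-hand process evaluated at $t=1$ reduces exactly to $\sqrt{n}\,T_{n,r}(X)$, which is the random vector appearing in the corollary.

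The second step is to identify the limiting law at $t=1$. By construction $(\textbf{W}_{\Gamma^{(r)}}(t))_{t\in[0,1]}$ is a two-dimensional Brownian motion with $\Cov(\textbf{W}_{\Gamma^{(r)}}(t),\textbf{W}_{\Gamma^{(r)}}(s))=\min(s,t)\,\Gamma^{(r)}$, so taking $s=t=1$ yields covariance $\min(1,1)\,\Gamma^{(r)}=\Gamma^{(r)}$. Hence $\textbf{W}_{\Gamma^{(r)}}(1)$ is centred Gaussian with covariance matrix $\Gamma^{(r)}$, i.e. $\textbf{W}_{\Gamma^{(r)}}(1)\sim\mathcal{N}(0,\Gamma^{(r)})$, precisely the limit claimed in the corollary, with $\Gamma^{(r)}=(\Gamma_{ij}^{(r)},\,1\le i,j\le 2)$ already given in Theorem~\ref{thm:biv_FCLT}.

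The remaining, and only genuinely delicate, step is to pass from convergence of the whole trajectory in $D_2[0,1]$ to convergence of its value at the fixed time $t=1$. I would carry this out through the continuous mapping theorem applied to the coordinate-evaluation functional $\pi_1:D_2[0,1]\to\R^2$, $\pi_1(x)=x(1)$. This functional is continuous in the Skorohod topology at every path that is continuous at $t=1$, and since the limit $\textbf{W}_{\Gamma^{(r)}}$ has almost surely continuous sample paths, the set of discontinuities of $\pi_1$ is null under the limit law; the continuous mapping theorem then gives $\sqrt{n}\,T_{n,r}(X)=\pi_1\!\big(\sqrt{n}\,t\,T_{[nt],r}(X)\big)\overset{d}{\rightarrow}\pi_1(\textbf{W}_{\Gamma^{(r)}})=\textbf{W}_{\Gamma^{(r)}}(1)$. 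Equivalently, one may simply invoke that convergence in distribution in $D[0,1]$ forces convergence of the finite-dimensional distributions at every continuity point of the limit, which here is all of $[0,1]$. Combining the three steps yields $\sqrt{n}\,T_{n,r}(X)\overset{d}{\rightarrow}\mathcal{N}(0,\Gamma^{(r)})$. The main obstacle, such as it is, lies entirely in this last point: point evaluation is not continuous on $D[0,1]$ in general, and continuity is recovered only because the limiting Brownian motion has continuous paths, so this is the one place where a careful appeal to sample-path continuity is needed rather than a purely formal substitution.
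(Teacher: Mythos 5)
Your proposal is correct and follows exactly the route the paper intends: the paper states that Corollary~\ref{cor:biv_CLT} is a direct consequence of Theorem~\ref{thm:biv_FCLT} obtained by choosing $t=1$, and omits the details. Your additional care in justifying the evaluation at $t=1$ via the continuous mapping theorem and the a.s.\ continuity of the Brownian limit is a sound (indeed slightly more rigorous) elaboration of the same argument, not a different approach.
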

We can also recover the CLT between the sample quantile and the $r$-th absolute centred sample moment stated in the iid case; see \cite{Brautigam22}, Theorem~4.1, which, of course, requires less restrictive conditions.
\begin{corollary} \label{cor:biv_CLT_iid} (Theorem~4.1 in \cite{Brautigam22}).
Consider the series of iid rv's $(X_n)_{n \in \Z}$ with parent rv $X$. 
Assume that $X$ satisfies $(M_r)$ and both conditions $(C_2^{~'})$ and $(C_1^{+})$ in a neighbourhood of $q_X(p)$. 
Additionally, for $r=1$, assume $(C_0)$ at $\mu$.
Then, the joint behaviour of the sample quantile $q_n(p)$ (for $p \in (0,1)$) and the $r$-th absolute centred sample moment $\hat{m}(X,n,r)$, is asymptotically bivariate normal:
\begin{equation} 
\sqrt{n} \, \begin{pmatrix} q_n (p) - q_X(p) \\ \hat{m}(X,n,r)  - m(X,r) \end{pmatrix} \; \underset{n\to\infty}{\overset{d}{\longrightarrow}} \; \mathcal{N}(0, \Gamma^{(r)}), 
\end{equation}
where the asymptotic covariance matrix $\displaystyle \Gamma^{(r)}=(\Gamma_{ij}^{(r)}, 1\le i,j\le 2)$ simplifies to
{\small
\begin{align*}
\Gamma_{11}^{(r)}&= \frac{p(1-p)}{f_X^2(q_X(p))}; \\ 
\Gamma_{22}^{(r)} &= r^2 \E[ X_0^{r-1} \sgn(X_0 )^r]^2 \sigma^2 + \Var(\lvert X_0  \rvert^r) - 2r \E[ X_0^{r-1} \sgn(X_0)^r] \Cov(X_0,\lvert X_0  \rvert);
\\  \Gamma_{12}^{(r)}&= \Gamma_{21}^{(r)} = \frac{1}{f_X(q_X(p))} \left( r \E[ X_0^{r-1} \sgn(X_0)^r] \Cov(\1_{(X_0 \leq q_X(p))},X_0) - \Cov(\1_{(X_0 \leq q_X(p))},\lvert X_0 \rvert^r) \right).
\end{align*}
}
\end{corollary}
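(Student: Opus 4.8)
The plan is to obtain Corollary~\ref{cor:biv_CLT_iid} as the degenerate, fully independent specialization of Theorem~\ref{thm:biv_FCLT} (equivalently, of Corollary~\ref{cor:biv_CLT}), so that no separate machinery is required; this simultaneously recovers Theorem~4.1 of~\cite{Brautigam22}. First I would observe that a strictly stationary iid sequence $(X_n)_{n\in\Z}$ is trivially a functional of a $\phi$-mixing process: taking $Z_n = X_n$, independence gives $\phi(n)=0$, and hence $\beta(n)=0$, for every $n\ge 1$, so the mixing rate $\beta(n)=O(n^{-\kappa})$ holds for every $\kappa>3$. Moreover $X_0 = \E[X_0 \mid \mathcal{F}_{Z,-k,+k}]$ for all $k\ge 0$, so $(X_n)_{n\in\Z}$ is $L_2$-NED on $(Z_n)_{n\in\Z}$ with $\nu(k)\equiv 0 = O(k^{-s})$ for every $s>6$, and likewise $(\lvert X_n\rvert^r)_{n\in\Z}$ is $L_2$-NED with $\nu(k)\equiv 0$. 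Thus conditions $(D1)$ and $(D2)$ are automatically satisfied, and the only hypotheses that do not reduce to triviality are the marginal conditions $(M_r)$, $(C_1^{+})$ and $(C_2^{~'})$ at/near $q_X(p)$, together with $(C_0)$ at $\mu$ when $r=1$ --- which are precisely those assumed in the corollary.

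Next I would invoke Theorem~\ref{thm:biv_FCLT} at $t=1$ (that is, Corollary~\ref{cor:biv_CLT}) to obtain the bivariate asymptotic normality $\sqrt{n}\,T_{n,r}(X)\overset{d}{\to}\mathcal{N}(0,\Gamma^{(r)})$, with $\Gamma^{(r)}$ given by the general formulas of Theorem~\ref{thm:biv_FCLT}. It then remains to simplify the covariance data $(Co)$ under independence. Since $\Cov(X_i,X_0)=0$, $\Cov(\lvert X_i\rvert^r,\lvert X_0\rvert^r)=0$ and $\E[\1_{(X_0\le q_X(p))}\1_{(X_i\le q_X(p))}] = p^2$ for all $i\neq 0$, every infinite sum in $(Co)$ collapses to its single surviving ($i=0$) term. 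This yields $\Var(U)=\sigma^2$, $\Var(V)=\Var(\lvert X_0\rvert^r)$, $\Var(W)=p(1-p)/f_X^2(q_X(p))$, $\Cov(U,V)=\Cov(X_0,\lvert X_0\rvert^r)$, while $\Cov(U,W)$ and $\Cov(V,W)$ reduce to the corresponding single-lag covariances $-\Cov(\1_{(X_0\le q_X(p))},X_0)/f_X(q_X(p))$ and $-\Cov(\lvert X_0\rvert^r,\1_{(X_0\le q_X(p))})/f_X(q_X(p))$. Substituting these into the definitions of $\Gamma_{11}^{(r)}$, $\Gamma_{22}^{(r)}$ and $\Gamma_{12}^{(r)}$ reproduces the displayed expressions.

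There is essentially no genuine obstacle here, the statement being a special case; the only point demanding care is the bookkeeping when the two-sided sums defining $\Cov(U,W)$ and $\Cov(V,W)$ are collapsed. In particular, one must track the factor $-1/f_X(q_X(p))$ and verify that the sign produced by the term $-r\,\E[X_0^{r-1}\sgn(X_0)^r]\,\Cov(U,W)$ in $\Gamma_{12}^{(r)}$ combines with $\Cov(V,W)$ to give exactly the off-diagonal entry $\tfrac{1}{f_X(q_X(p))}\bigl(r\,\E[X_0^{r-1}\sgn(X_0)^r]\Cov(\1_{(X_0\le q_X(p))},X_0) - \Cov(\1_{(X_0\le q_X(p))},\lvert X_0\rvert^r)\bigr)$ claimed in the corollary. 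A brief check of these signs and of the single-lag reduction of $\Cov(U,V)$ completes the argument.
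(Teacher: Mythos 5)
Your proposal is correct and coincides with the paper's own treatment: Section~\ref{ssec:cor_proofs} simply notes that Corollary~\ref{cor:biv_CLT_iid} is a direct consequence of Theorem~\ref{thm:biv_FCLT}, and your verification that an iid sequence makes $(D1)$--$(D2)$ trivial (with $\nu(k)\equiv 0$ and $\phi(n)=0$) and that the sums in $(Co)$ collapse to their $i=0$ terms is exactly the specialization the authors leave implicit.
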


{\it Idea of the proof -}
Let us briefly sketch in three main steps the proof of Theorem~\ref{thm:biv_FCLT} and, analogously, of Corollary~\ref{cor:biv_FCLT_abs_mixing}, which is developed in Section~\ref{sec:proofs}. 
\begin{itemize}
  \item First, we represent the sample quantile and $r$-th absolute sample moment as sums of functionals of the process $(X_n)$, so that $T_{n,r}(X)$ has the following representation, based on $f(X_j)_{j\in \N}$, for a measurable function $f: \mathbb{R} \rightarrow \mathbb{R}^2$, with $\E[f(X_j)] =0$ and $\| f(X_j) \|_2 < \infty$, $\forall j$,
  \begin{equation*} 
  T_{n,r}(X) = \frac{1}{n} \sum_{j=1}^n f(X_j) + o_P(1/\sqrt{n})\qquad\text{as $n \rightarrow \infty$.}
  \end{equation*}
For the sample quantile, we use its Bahadur representation, using a version 
given in \cite[Theorem 1]{Wendler11}. For that, we check that the NED, moments and mixing of the process are satisfied. These conditions are needed for approximating the sample quantile by an iid sample (and showing that the rest is asymptotically negligible). This approximation idea (coupling technique) dates back to the 70's with Sen (see e.g. \cite{Wu05} for a brief historical review on the Bahadur representation).
Next, we prove, under some conditions, a corresponding asymptotic representation for the $r$-th absolute centred sample moment, extending results from~\cite{Segers14} and \cite{Brautigam22}. Here, more work is involved as we consider not only the case when the mean is known but also an unknown mean. 
As such a representation is of interest on its own, we state it as a separate result, in Proposition~\ref{prop:Abs_central_mom_asympt_garch}. 
  \item Next, we have to ensure that each of the components of $f(X_j)_{j \in \N}$ fulfil the conditions needed for a multivariate FCLT.
 Those include moment conditions on $f(X_j)$, for any $j$, $L_2$-NED of $f(X_j)$, as well as summability conditions of the mixing coefficients and constants $\nu(k)$. 
 In particular, we show that the $L_2$-NED condition on $f(X_j)$ can be reduced to the $L_2$-NED condition for the processes $(X_j)_{j \in \Z}$ and $\left( \lvert X_j \rvert^r \right)_{j \in \Z}$.
  \item Finally, combining these results, we show that all conditions are met to apply a multivariate FCLT given in \cite[Theorem~3.2]{DeJong00} (a multivariate version of Theorem 1.2 in \cite{Davidson02}) that we present, adapted to our situation, in Theorem~\ref{thm:mult_FCLT}. 
\end{itemize}
Note that, for the FCLT to hold for the very general class of stationary processes considered in this study, we will have to separately analyse the cases of $\phi$-mixing (with moment condition $(M_r)$) and absolute regularity (with moment condition $(M_{r \frac{\beta+\epsilon}{\beta-1}})$) for $(Z_n)$. In each of the two cases, we need to introduce a different set of conditions for the FCLT to hold for $X_n=f(Z_n)$, making sure that the mixing conditions on $(X_n)$ are inherited from those on $(Z_n)$. The $\phi$-mixing case is done in the proof of Theorem~\ref{thm:biv_FCLT}, the absolute regularity one in the proof of Corollary~\ref{cor:biv_FCLT_abs_mixing}. 

\section{Application to augmented GARCH and ARMA processes}  \label{sec:exp}

We focus in this section on two classes of processes widely used in application, which fall within the framework of Theorem~\ref{thm:biv_FCLT}. By doing so, we provide new results when considering joint standard estimators for such processes. 
In Section~\ref{ssec:GARCH} we consider the broad family of augmented GARCH($p$, $q$) processes with iid innovations, while, in Section~\ref{ssec:ARMA}, ARMA processes with dependent (absolutely regular) innovations; we also discuss an example, the ARMA-GARCH process.
For the interested reader, we provide in the \hyperref[appn]{Appendix} 
further specific examples of processes that fall in one of the two mentioned families, with conditions on the moments and parameters of the process (for the presented results to hold) stated explicitly for convenience.

\subsection{Bivariate FCLT for augmented GARCH($p$,$q$) processes} \label{ssec:GARCH}

Since the introduction of the ARCH and GARCH processes in the seminal papers by Engle, \cite{Engle82}, and Bollerslev, \cite{Bollerslev86}, respectively, various GARCH modifications and extensions have been proposed and their statistical properties analysed (see e.g. \cite{Bollerslev08} for an (G)ARCH glossary). 
Many such well-known GARCH processes can be seen as special cases of the class of augmented GARCH($p$, $q$) processes, established by Duan in \cite{Duan97}.
An augmented GARCH($p$,$q$) process $X=(X_t)_{t \in \Z}$ satisfies, for integers $p \geq 1$ and $q\geq 0$,
\begin{equation}\label{eq:augm_GARCH_pq}
X_t = \sigma_t ~\epsilon_t 
\quad \text{with~} \quad \Lambda(\sigma_t^2) = \sum_{i=1}^p g_i (\epsilon_{t-i}) + \sum_{j=1}^q c_j (\epsilon_{t-j}) \Lambda(\sigma_{t-j}^2),
\end{equation}
where $(\epsilon_t)$ is a series of iid rv's with mean $0$ and variance $1$, $\sigma_t^2 = \Var(X_t)$ and $\Lambda, g_i, c_j, i=1,...,p, j=1,...,q$, are real-valued measurable functions. 
Also, as in \cite{Lee14}, we restrict the choice of $\Lambda$ to the so-called group of either polynomial GARCH($p$,$q$) or exponential GARCH($p$,$q$) processes (see Figure~\ref{fig:GARCH_nested} in the \hyperref[appn]{Appendix} for a representation of the hierarchy of these processes):
\[ (Lee) \quad \quad \Lambda(x) = x^{\delta}, \text{~for some~} \delta >0, \qquad \text{~or~} \qquad \Lambda(x) = \log(x).\]
Clearly, for a strictly stationary solution to~\eqref{eq:augm_GARCH_pq} to exist, the functions $\Lambda, g_i, c_j$ as well as the innovation process $(\epsilon_t)_{t \in \Z}$ have to fulfil some regularity conditions (see e.g. \cite{Lee14}, Lemma 1).
Alike, for the bivariate FCLT to hold, certain conditions on the functions $g_i, c_j, i=1,...,p, j=1,...,q$, of the augmented GARCH($p$,$q$) process of the $(Lee)$ family are needed, namely: Positivity of the functions used, (A), and boundedness in $L_r$-norm for either the polynomial GARCH, $(P_r)$, or exponential GARCH, $(L_r)$, respectively, for a given integer $r>0$,
\begin{align*}
&(A) && g_i \geq 0, c_j \geq 0, i=1,...,p,~j=1,...,q,
\\ &(P_r) && \sum_{i=1}^p \| g_i(\epsilon_0) \|_r < \infty, \quad \sum_{j=1}^q \| c_j(\epsilon_0) \|_r < 1, 
\\ &(L_r) &&  \E\left[ \exp\left(4r \sum_{i=1}^p \lvert g_i(\epsilon_0) \rvert^2\right)\right] < \infty, \quad \sum_{j=1}^q \lvert c_j(\epsilon_0) \rvert < 1. 
\end{align*}
Note that Condition $(L_r)$ requires the $c_j$ to be bounded functions.

\begin{remark}
By construction, from \eqref{eq:augm_GARCH_pq}, $\sigma_t$ and $\epsilon_t$ are independent (and $\sigma_t$ is a functional of $(\epsilon_{t-j})_{j=1}^{\infty}$). Thus, the conditions on the moments, distribution and density, could be formulated in terms of $\epsilon_t$ only. At the same time, this might impose some conditions on the functions $g_i, c_j, i=1,...,p, j=1,...,q$ (which might not be covered by $(A)$, $(P_r)$ or $(L_r)$). Thus, we keep the conditions on $(X_t)_{t \in \Z}$ as such, even if they might not be minimal.
\end{remark}
Now, let us explain why and under which circumstances Theorem~\ref{thm:biv_FCLT} holds. 
It has been shown in the literature under which conditions the class of augmented GARCH($p$, $q$) processes fulfills the $L_2$-NED. 
More precisely, the conditions of geometric $L_2$-NED on $(X_0)$ and $(\lvert X_0 \rvert^r)$ are satisfied, on the one hand in the polynomial case under $(M_r), (A)$ and $(P_{\max(1,r/\delta)})$ via Corollary 2 in \cite{Lee14}, on the other hand in the exponential case under $(M_r), (A)$ and $(L_r)$ via Corollary 3 in \cite{Lee14}. 
This can be directly used to reframe the general FCLT given in Theorem~\ref{thm:biv_FCLT} for the class of augmented GARCH($p$, $q$) processes: 
\\ 
$(Lee), (A)$ and $P_{max(1, r/\delta)}$ for polynomial, $(L_r)$ for exponential GARCH models respectively, are sufficient conditions for (D1) and (D2) in Theorem~\ref{thm:biv_FCLT} to hold. Thus, Theorem~\ref{thm:biv_FCLT} translates as follows:
\begin{corollary}\label{thm:augm_GARCH_pq_asympt_qn}
Consider an augmented GARCH($p$,$q$) process $X$ as defined in~\eqref{eq:augm_GARCH_pq}, which satisfies: 
\vspace{-2ex}
\begin{itemize}
\item $(M_r)$, both conditions $(C_2^{~'}), (C_1^{+})$ at $q_X(p)$, and $(C_0)$ at $0$ for $r=1$
\item $(Lee)$, $(A)$, and either $(P_{max(1,r/\delta)})$ for $X$ belonging to the group of polynomial GARCH, or $(L_r)$ for the group of exponential GARCH
\end{itemize}
\vspace{-2ex}
Then, for $t \in [0,1]$, as $n\to\infty$, we obtain the FCLT 
\[  \sqrt{n}~t ~ \begin{pmatrix} q_{[nt]}(p) -q_X(p) \\ \hat{m}(X,[nt],r) -m(X,r) \end{pmatrix}  \overset{D_2[0,1]}{\rightarrow}  \textbf{W}_{\Gamma^{(r)}} (t), \]
where the 2-dimensional Brownian motion, $(\textbf{W}_{\Gamma^{(r)}}(t))_{t \in [0,1]}$, has the same covariance matrix as in Theorem~\ref{thm:biv_FCLT}.
\end{corollary}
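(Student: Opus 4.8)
The plan is to show that, under the stated hypotheses, an augmented GARCH($p$,$q$) process of the $(Lee)$ family satisfies every assumption of Theorem~\ref{thm:biv_FCLT}, so that the conclusion follows by direct application. First I would identify the driving process: since $X_t = \sigma_t\,\epsilon_t$ with $\sigma_t$ a measurable functional of $(\epsilon_{t-j})_{j\geq 1}$, the process $(X_t)$ is a functional $X_t = f(\epsilon_{t+k},\,k\in\Z)$ of the iid innovation sequence $(\epsilon_t)$. An iid sequence is strictly stationary and $\phi$-mixing with $\phi(n)=0$ for all $n\geq 1$; in particular its $\beta$-mixing coefficients satisfy $\beta(n)=O(n^{-\kappa})$ for every $\kappa$, so the mixing part of $(D1)$ holds trivially (cf.\ Remark~\ref{rmk:rmk_after_thm}, item~4). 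I would also note that, because $\sigma_t$ and $\epsilon_t$ are independent with $\E[\epsilon_0]=0$, the marginal mean is $\mu=\E[\sigma_0]\,\E[\epsilon_0]=0$, which is exactly why the requirement $(C_0)$ at $\mu$ reads $(C_0)$ at $0$ in the case $r=1$. The remaining marginal conditions $(M_r)$, $(C_2^{~'})$ and $(C_1^{+})$ at $q_X(p)$ are assumed outright.

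The substance of the argument is the verification of the near-epoch-dependence conditions $(D1)$ and $(D2)$. Here I would invoke Lee \cite{Lee14}: under $(Lee)$, $(A)$ together with $(P_{\max(1,r/\delta)})$ in the polynomial case (Corollary~2 in \cite{Lee14}) or $(L_r)$ in the exponential case (Corollary~3 in \cite{Lee14}), both $(X_0)$ and $(\lvert X_0\rvert^r)$ are \emph{geometrically} $L_2$-NED on the innovation sequence, i.e.\ $\nu(k)=O(e^{-\delta k})$ for some $\delta>0$. The key observation is that geometric decay dominates every polynomial rate, whence $\nu(k)=O(k^{-s})$ for all $s>6$ and $\nu(k)=O(k^{-\gamma})$ for all $\gamma>2$. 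This delivers $(D1)$ for $(X_n)$ and, when $r>1$, $(D2)$ for $(\lvert X_n\rvert^r)$ simultaneously; for $r=1$ only $(D1)$ is needed, and $(D2)$ is vacuous.

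With all hypotheses of Theorem~\ref{thm:biv_FCLT} in force — the marginal conditions directly, the dependence conditions through the geometric $L_2$-NED above, and the $\phi$-mixing case applying because the innovations are iid (so that the plain moment input $(M_r)$ suffices, rather than the strengthened moment condition of Corollary~\ref{cor:biv_FCLT_abs_mixing}) — the FCLT for $T_{[nt],r}(X)$ with limiting covariance matrix $\Gamma^{(r)}$ follows at once, completing the proof.

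The only genuinely delicate point is the bookkeeping: matching Lee's two corollaries — one for polynomial, one for exponential $\Lambda$ — against the two \emph{distinct} NED requirements, on $(X_n)$ and on $(\lvert X_n\rvert^r)$, and checking that the moment assumption $(M_r)$ is precisely the moment hypothesis those corollaries demand. Beyond this alignment of conditions there is no further analytic obstacle, since the entire probabilistic core already resides in Theorem~\ref{thm:biv_FCLT}.
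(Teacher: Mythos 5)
Your proposal is correct and follows essentially the same route as the paper: the paper also reduces the corollary to Theorem~\ref{thm:biv_FCLT} by citing Corollaries~2 and~3 of Lee to get geometric $L_2$-NED of both $(X_0)$ and $(\lvert X_0\rvert^r)$ on the iid innovations (which are trivially $\phi$-mixing), observing that geometric decay dominates the polynomial rates required in $(D1)$ and $(D2)$. Your additional remarks on $\mu=0$ explaining the "$(C_0)$ at $0$" phrasing and on why the plain moment condition $(M_r)$ suffices (the $\phi$-mixing rather than absolutely regular branch) are consistent with the paper, which treats this corollary as a direct consequence and omits a separate written proof.
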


\subsection{Bivariate FCLT for ARMA($p$,$q$) processes}  \label{ssec:ARMA}

Similarly to the family of GARCH processes, ARMA (AutoRegressive - Moving Average) processes are widely used in applications (e.g. for financial time series).
While GARCH models specify a structure of the conditional variance of the process, ARMA processes specify the conditional mean. \\
Recall that a general ARMA($p$,$q$) process $X=(X_t)_{t \in \Z}$, for integers $p \geq 1$ and $q\geq 0$, is a stationary process defined by
\begin{equation}\label{eq:ARMA_pq}
\Phi(B) X_t=\Theta(B) \epsilon_t
\end{equation}
where $(\epsilon_t)$ is, in its most general form, a series of rv's with mean $0$ and finite variance, 
the backward operator $B$ denotes the application $B(X_t)=X_{t-1}$, 
and   $\Phi$ and $\Theta$ are polynomials of order $p$ and $q$ respectively, 
defined by
$\Phi(z)=1+\phi_1z+\ldots + \phi_pz^p$ ($\phi_p \ne 0$) and
$\Theta(z)=1+\theta_1z+\ldots +\theta_qz^q$ ($\theta_q \ne 0$),
such that $\Phi$ and $\Theta$ do not have any common root (to have a unique solution to \eqref{eq:ARMA_pq}).
A necessary and sufficient condition for $X$ to be causal, {\it i.e.} 
$\displaystyle X_t=\sum_{j=0}^{\infty}\psi_j\epsilon_{t-j}, \, t\in\Z$, with $\displaystyle  \sum_{j=0}^{\infty}|\psi_j|<\infty$, is:
\begin{equation}\label{eq:causal}
 \Phi(z)\ne 0, \forall z\in\C \mbox{  s.t.  } |z|\le 1.
\end{equation}

As already mentioned, various specifications of ARMA processes exist; we refer to \cite{Brockwell91}, \cite{Box15} and the survey article \cite{Holan10} for further references and examples. 
While the simplest case assumes iid innovations $(\epsilon_t)$, mixing innovations have been introduced as they allow for broader applications. We can consider this general case thanks to the setup of Theorem~\ref{thm:biv_FCLT} (and Corollary~\ref{cor:biv_FCLT_abs_mixing} respectively).

Note that the geometric $L_2$-NED of ARMA($p$, $q$) processes with mixing innovations can be directly deduced from Theorem 3.1 in \cite{Qiu11} under the causality condition \eqref{eq:causal}. 
But, in contrast to the case of augmented GARCH($p$,$q$) processes, we do not have results on the $L_2$-NED of $(\lvert X_0 \rvert^r)$. So we establish this property in Lemma~\ref{lemma:ARMA_NED_Xp} (see Section~\ref{sec:aux_res}), showing that the causality condition \eqref{eq:causal} is still sufficient. 
With these informations at hand, we can replace conditions (D1) and (D2) of Theorem~\ref{thm:biv_FCLT} by specific conditions for the class of ARMA($p$, $q$) processes with mixing innovations and state the following: 
\begin{corollary}\label{cor:ARMA_pq_asympt_qn}
Consider a causal ARMA($p$,$q$) process $X$ as defined in~\eqref{eq:ARMA_pq}, such that:
\vspace{-2ex}
\begin{itemize}
\item Both conditions $(C_2^{~'}), (C_1^{+})$ at $q_X(p)$ hold, as well as $(C_0)$ at $0$ for $r=1$
\item $X$ has either $\phi$-mixing innovations (denoted $X\sim$ARMA-$\phi$), or absolutely regular innovations ($X\sim$ARMA-$\beta$) - in both cases with mixing coefficient of order $O(n^{-\kappa})$, for some $\kappa>3$
\item  $X$ satisfies $(M_r)$ for $X$ ARMA-$\phi$, or $(M_\frac{r \kappa + \epsilon}{\kappa-1})$, for any $\epsilon>0$ for ARMA-$\beta$.
\end{itemize}
\vspace{-2ex}
Then, for $t \in [0,1]$, as $n\to\infty$, 
\vspace{-2ex}
\[  \sqrt{n}~t ~ \begin{pmatrix} q_{[nt]}(p) -q_X(p) \\ \hat{m}(X,[nt],r) -m(X,r) \end{pmatrix}  \overset{D_2[0,1]}{\rightarrow}  \textbf{W}_{\Gamma^{(r)}} (t), \]
\vspace{-2ex}
where $(\textbf{W}_{\Gamma^{(r)}}(t))_{t \in [0,1]}$, the 2-dimensional Brownian motion, has the same covariance matrix as in Theorem~\ref{thm:biv_FCLT}.
\end{corollary}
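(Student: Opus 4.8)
The plan is to realise the causal ARMA process as a near-epoch dependent functional of its own innovation sequence, and then to verify, case by case, that all hypotheses of Theorem~\ref{thm:biv_FCLT} (for ARMA-$\phi$) or of Corollary~\ref{cor:biv_FCLT_abs_mixing} (for ARMA-$\beta$) hold, so that the conclusion follows verbatim with the same covariance matrix $\Gamma^{(r)}$. The natural choice of underlying mixing process is $(Z_n)=(\epsilon_n)$, the innovations, which by assumption are $\phi$-mixing, respectively absolutely regular, with coefficient of order $O(n^{-\kappa})$, $\kappa>3$ --- exactly the rate appearing in the dependence conditions of the two results. Under the causality condition~\eqref{eq:causal} we have the one-sided representation $X_t=\sum_{j\geq 0}\psi_j\,\epsilon_{t-j}$ with geometrically decaying coefficients $\psi_j$, so that $X_n$ is a (causal) functional of $(\epsilon_n)$; this is precisely the special case highlighted in Remark~\ref{rmk:rmk_after_thm}.

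The first substantive step is to supply the two NED hypotheses. For condition (D1) (resp. (D1$^*$)) I would invoke Theorem~3.1 in \cite{Qiu11}, which gives that a causal ARMA process with mixing innovations is geometrically $L_2$-NED on $(\epsilon_n)$; since $\nu(k)=O(e^{-\delta k})$ dominates $k^{-s}$ for every $s$, the required rate $s>6$ is automatic. For condition (D2), which concerns $(\lvert X_n\rvert^r)$ when $r>1$, the corresponding NED is not available off the shelf, and this is where the genuinely new work sits: I would appeal to Lemma~\ref{lemma:ARMA_NED_Xp}, established for this purpose, which shows that causality~\eqref{eq:causal} already forces $(\lvert X_n\rvert^r)$ to be (geometrically) $L_2$-NED, hence $\nu(k)=O(k^{-\gamma})$ for any $\gamma>2$. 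The marginal conditions $(C_2^{~'})$, $(C_1^{+})$ at $q_X(p)$ and, for $r=1$, $(C_0)$ at $\mu=0$ are assumed directly; note that causality with mean-zero innovations yields $\E[X_0]=0$, which is why the continuity requirement for $r=1$ is stated at $0$.

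It then remains to match the moment requirements. In the ARMA-$\phi$ case the moment condition of Theorem~\ref{thm:biv_FCLT} is exactly $(M_r)$, so nothing further is needed. In the ARMA-$\beta$ case Corollary~\ref{cor:biv_FCLT_abs_mixing} asks for $(M_{r(\beta+\epsilon)/(\beta-1)})$ with $\beta=\min(s-3,\kappa)$; because the $L_2$-NED rate $s$ may be taken arbitrarily large (the NED being geometric), we have $\beta=\kappa$, so the requirement reduces to $(M_{r(\kappa+\epsilon)/(\kappa-1)})$, which after relabelling $\epsilon$ is the stated condition $(M_{(r\kappa+\epsilon)/(\kappa-1)})$. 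With all hypotheses verified, Theorem~\ref{thm:biv_FCLT} (resp. Corollary~\ref{cor:biv_FCLT_abs_mixing}) delivers the claimed FCLT.

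The proof is thus essentially a bookkeeping reduction to the general results, and the only real obstacle is the $L_2$-NED of the power process $(\lvert X_n\rvert^r)$: it has no counterpart in the GARCH setting (where Corollaries~2 and~3 of \cite{Lee14} supplied it directly), and must be proved separately in Lemma~\ref{lemma:ARMA_NED_Xp}. A secondary point requiring care is the moment-condition translation in the absolutely regular case, where one must confirm that the geometric NED rate lets $\beta=\min(s-3,\kappa)$ collapse to $\kappa$, so that the exponent in $(M_\cdot)$ depends only on the innovation mixing rate $\kappa$.
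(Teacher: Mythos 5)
Your proposal is correct and follows essentially the same route as the paper: reduce to Theorem~\ref{thm:biv_FCLT} (resp.\ Corollary~\ref{cor:biv_FCLT_abs_mixing}) by supplying the geometric $L_2$-NED of $(X_n)$ via \cite{Qiu11} and of $(\lvert X_n\rvert^r)$ via Lemma~\ref{lemma:ARMA_NED_Xp}, with geometric decay dominating every polynomial rate. Your extra observation that $\beta=\min(s-3,\kappa)$ collapses to $\kappa$ in the absolutely regular case is a useful explicit justification of the stated moment exponent that the paper leaves implicit.
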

Among ARMA processes with mixing innovations, the ARMA-GARCH process is a widespread example in applications (see e.g. \cite{Song18, Spierdijk16, Hoga19}).
\begin{example}[ARMA($p$, $q$)-GARCH($r$,$s$) process (for $p\geq 1, q\geq0, r\geq 1, s \geq0$)] \label{exp:ARMA-GARCH}~\\
A general ARMA($p$, $q$)-GARCH($r$,$s$) $(X_t)_{t \in \Z}$  is defined as follows: 
$$
X_t = \sum_{i=1}^p \phi_i y_{t-i} + \sum_{i=1}^q \theta_i \epsilon_{t-i} + \epsilon_t,
\;\text{where}\;\epsilon_t = \eta_t \sigma_t \;\text{and}\;  \sigma_t^2 = \alpha_0 + \sum_{i=1}^r \alpha_i \epsilon_{t-i}^2 + \sum_{i=1}^s \beta_i \sigma_{t-i}^2,
$$
 for $(\eta_t)$ an iid series with mean $0$ and variance $1$.

Let us discuss the conditions of Corollary~\ref{cor:ARMA_pq_asympt_qn} applied to this class of processes. 
\begin{itemize}
\item Causality: For an ARMA($p$,$q$)-GARCH($r$, $s$) to be causal, \eqref{eq:causal} needs to be fulfilled
\item Conditions on the marginal distribution: $(C_2^{~'}), (C_1^{+})$ at $q_X(p)$ as well as $(C_0)$ at $0$ for $r=1$
\item Mixing innovations: The GARCH($r$,$s$) process is known to be absolutely regular with geometric rate as long as it is strictly stationary, $\eta_0$ is absolutely continuous with Lebesgue density being strictly positive in a neighbourhood of 0, and $\E[\lvert \eta_0 \rvert^{t}]$ for some $t \in (0, \infty)$; see \cite[Theorem~8]{Lindner09} (the original result going back to \cite{Boussama98}).\\ 
A necessary condition for the strict stationarity of the GARCH($p$,$q$) process is known in the literature; see e.g. \cite[Theorem 1.3]{Bougerol92}. A sufficient condition, easier to verify in practice, is: $\E[\eta_0^2] \sum_{i=1}^r \alpha_i + \sum_{i=1}^s \beta_i < 1$ for $\E[\eta_0^2]< \infty$ (see \cite{Bollerslev86}). 
\item Moment conditions: As the innovations are absolutely regular with geometric rate, the corresponding moment condition is $(M_{\frac{r \kappa+ \epsilon}{\kappa-1}})$ for any $\kappa, \epsilon>0$. 
Necessary conditions for the existence of such moments depend on the specifications of the ARMA process and can be found e.g. in \cite[Theorem 2.2]{Ling03}.
In practice, a sufficient moment condition would be $(M_{r+1/2})$. 
\end{itemize}
\end{example}

\section{Auxiliary Results} \label{sec:aux_res}

In this section we present two different types of results. 
First, we provide in Proposition~\ref{prop:Abs_central_mom_asympt_garch} an asymptotic representation of the $r$-th absolute centred sample moment $\hat{m}(X,n,r)= \frac{1}{n} \sum_{i=1}^n \lvert X_i - \bar{X}_n \rvert^r$, for any integer $r\geq 1$. 
Then, in Lemmas~\ref{lemma:L2NED_bd_fct},~ \ref{lemma:NED_for_polys_general} and ~\ref{cor:NED_for_polys}, we propose sufficient conditions such that the $L_2$-NED property of $(X_n)_{n \in \Z}$ is inherited for certain bounded and unfunctionals $(f(X_n))_{n \in \Z}$. We use them to establish the $L_2$-NED of $(\1_{\left( X_n \leq q_X(p) \right)})_{n \in \Z}$ and $(\lvert X_n \rvert^r)_{n \in \Z}, r \in \N$, needed in the proof of Theorem~\ref{thm:biv_FCLT}.  
Additionally, Lemma~\ref{lemma:ARMA_NED_Xp} treats the specific case of the $L_2$-NED of $(\lvert (X_n) \rvert^r)_{n \in \Z}$ for any integer $r$ for ARMA($p$,$q$) processes. 
Proofs of these results are deferred to Section~\ref{ssec:aux_res_proofs}.

\subsection{Representation for the $r$-th absolute centred sample moment}

Let us state the asymptotic relation between
the $r$-th absolute centred sample moment with known and unknown mean, respectively. This enables us to compute the asymptotics of $\hat{m}(X,n,r)$ (given that the needed moments exist). 
\begin{proposition} \label{prop:Abs_central_mom_asympt_garch}
Consider a stationary and ergodic process $(X_n)_{n \geq 0}$ that has `short-memory', i.e. \\
 $ \sum_{i=0}^{\infty} \lvert \Cov(X_0, X_i) \rvert < \infty$. 
Then, for any integer $r\geq 1$, assuming that the $r$-th moment of $X$ exists and $(C_0)$ holds at $\mu$ for $r=1$, we have the following asymptotics, as $n \rightarrow \infty$, 
{\small
\begin{eqnarray} \label{eq:Abs_central_mom_asympt_garch}
&&\sqrt{n} \left(\frac{1}{n} \sum_{i=1}^n \lvert X_i - \bar{X}_n \rvert ^r \right) = \nonumber\\
&& \sqrt{n} \left(\frac{1}{n} \sum_{i=1}^n \lvert X_i - \mu \rvert^r \right) - r \sqrt{n} (\bar{X}_n - \mu) \E[ (X_0 - \mu)^{r-1} \sgn(X_0 - \mu)^r] + o_P(1).
\end{eqnarray}
}
\end{proposition}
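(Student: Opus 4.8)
The plan is to read the passage from the unknown-mean statistic to the known-mean one as an empirical linearization, i.e. a delta-method argument carried out term by term. Write $y_i := X_i-\mu$ and $\Delta_n := \bar X_n-\mu$, so that $X_i-\bar X_n = y_i-\Delta_n$, and for each $i$ introduce the scalar function $\phi_i(t):=|y_i-t|^r$. Since $\tfrac{d}{dx}|x|^r = r|x|^{r-1}\sgn(x) = r\,x^{r-1}\sgn(x)^r$ for $r\ge 1$ (the two expressions agreeing on both half-lines), a first-order expansion of $\phi_i$ at $t=0$ reads $|y_i-\Delta_n|^r = |y_i|^r - r\,|y_i|^{r-1}\sgn(y_i)\,\Delta_n + R_i$, with $R_i$ the remainder. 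Averaging over $i$ and multiplying by $\sqrt n$, the claimed identity reduces to showing that the first-order term produces exactly the stated correction and that $\sqrt n\,\tfrac1n\sum_i R_i = o_P(1)$.

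For the first-order term, I would first note that the short-memory hypothesis $\sum_i|\Cov(X_0,X_i)|<\infty$ forces $\Var(X_0)<\infty$ and bounds $\Var(\sqrt n\,\bar X_n)$ uniformly in $n$, so that $\sqrt n\,\Delta_n = O_P(1)$ by Chebyshev. Next, by Birkhoff's ergodic theorem applied to the map $x\mapsto |x-\mu|^{r-1}\sgn(x-\mu)$ (integrable, since finiteness of the $r$-th moment gives $\E|X_0-\mu|^{r-1}<\infty$), one has $\tfrac1n\sum_i |y_i|^{r-1}\sgn(y_i) \to \E[(X_0-\mu)^{r-1}\sgn(X_0-\mu)^r]$ almost surely. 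Writing this average as its limit plus an $o_P(1)$ term and multiplying by the $O_P(1)$ factor $\sqrt n\,\Delta_n$ yields $-r\sqrt n\,\Delta_n\,\E[(X_0-\mu)^{r-1}\sgn(X_0-\mu)^r] + o_P(1)$, which is precisely the linear correction in \eqref{eq:Abs_central_mom_asympt_garch}.

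It then remains to control the remainder, and the argument splits according to the smoothness of $x\mapsto|x|^r$. For $r\ge 2$ this map is $C^1$ with $\phi_i''(t)=r(r-1)|y_i-t|^{r-2}\ge 0$, so the Lagrange form gives $|R_i| = \tfrac12 r(r-1)|y_i-\xi_i|^{r-2}\Delta_n^2$ for some $\xi_i$ between $0$ and $\Delta_n$, and the elementary bound $|y_i-\xi_i|^{r-2}\le C(|y_i|^{r-2}+|\Delta_n|^{r-2})$ reduces the task to showing $\sqrt n\,\Delta_n^2\big(\tfrac1n\sum_i|y_i|^{r-2}+|\Delta_n|^{r-2}\big)=o_P(1)$. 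This holds because $\sqrt n\,\Delta_n^2 = n^{-1/2}(\sqrt n\,\Delta_n)^2=o_P(1)$, while $\tfrac1n\sum_i|y_i|^{r-2}\to\E|X_0-\mu|^{r-2}<\infty$ by the ergodic theorem (using $r-2<r$) and $|\Delta_n|^{r-2}\to 0$.

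The genuinely delicate case is $r=1$, where $|x|$ has a kink at the origin and no second-order expansion is available; this is the step I expect to be the main obstacle. Here $R_i = |y_i-\Delta_n|-|y_i|+\Delta_n\sgn(y_i)$ vanishes whenever $y_i$ and $y_i-\Delta_n$ share a sign, i.e. whenever $|y_i|>|\Delta_n|$, and satisfies $|R_i|\le 2|\Delta_n|$ otherwise, so that $\sqrt n\,\big|\tfrac1n\sum_i R_i\big| \le 2\sqrt n\,|\Delta_n|\cdot\tfrac1n\sum_i\1_{(|X_i-\mu|\le|\Delta_n|)}$. The prefactor $\sqrt n\,|\Delta_n|$ is $O_P(1)$, so it suffices to show the empirical occupation fraction vanishes. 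Since $\bar X_n\to\mu$ almost surely gives $|\Delta_n|\to 0$, for any fixed $\delta_0>0$ one has, eventually, $\tfrac1n\sum_i\1_{(|X_i-\mu|\le|\Delta_n|)}\le\tfrac1n\sum_i\1_{(|X_i-\mu|\le\delta_0)}$, whose almost-sure limit is $\P(|X_0-\mu|\le\delta_0)$ by the ergodic theorem; letting $\delta_0\downarrow 0$ and invoking $(C_0)$ at $\mu$, which gives $\P(X_0=\mu)=0$, forces this fraction to $0$. Combining the three regimes, $\sqrt n\,\tfrac1n\sum_i R_i=o_P(1)$ in every case, which completes the argument.
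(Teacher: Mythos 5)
Your proof is correct, and it takes a genuinely different route from the paper's. The paper proves the proposition by adapting, case by case, the iid argument of Proposition~A.1 in \cite{Brautigam22}: even $r$ via a binomial expansion (following Example~5.2.7 of \cite{Lehmann99}), $r=1$ via Lemma~2.1 of \cite{Segers14}, and odd $r>1$ via Lemma~\ref{lemma:segers_modif_garch}, which isolates the cross term $\frac1n\sum_i(X_i-\mu)^{v}\bigl(\lvert X_i-\bar X_n\rvert-\lvert X_i-\mu\rvert\bigr)$ and controls it by counting the indices for which $X_i$ falls between $\bar X_n$ and $\mu$. You instead give a single, self-contained Taylor (delta-method) expansion of $t\mapsto\lvert y_i-t\rvert^r$ at $t=0$: the Lagrange remainder handles all $r\ge 2$ uniformly (for $r\ge 3$ via the $c_r$-inequality and the ergodic theorem applied to $\lvert y_i\rvert^{r-2}$, with $\sqrt n\,\Delta_n^2=o_P(1)$ doing the work), and for $r=1$ your occupation-fraction bound $\frac1n\sum_i\1_{(\lvert X_i-\mu\rvert\le\lvert\Delta_n\rvert)}\to 0$ is in substance the same $\card(\mathcal K_n)/n\to 0$ device used in \cite{Segers14} and in the paper's Lemma~\ref{lemma:segers_modif_garch}, deployed exactly where the kink of $\lvert\cdot\rvert$ requires it and where $(C_0)$ at $\mu$ enters. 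Your identification of $\lvert y\rvert^{r-1}\sgn(y)$ with $(X_0-\mu)^{r-1}\sgn(X_0-\mu)^r$ is valid since $\sgn(y)^{2r-1}=\sgn(y)$. The only blemish is cosmetic: for $r=2$ the factor $\lvert\Delta_n\rvert^{r-2}$ is $1$ rather than $o(1)$, but the remainder there is exactly $\Delta_n^2$ and $\sqrt n\,\Delta_n^2=o_P(1)$ suffices. What your approach buys is uniformity across $r$ and independence from the external iid-case references; what the paper's buys is reusability of Lemma~\ref{lemma:segers_modif_garch} (stated for general moments $v$) elsewhere.
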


\subsection{$L_2$-NED of functionals of $(X_n)_{n \in \Z}$}

Here, we question under which conditions some bounded and unbounded functionals, as e.g. $(\1_{\left( X_n \leq x \right)})_{n \in \Z}$ and $(\lvert X_n \rvert^r)_{n \in \Z}, r \in \N$, inherit the $L_2$-NED of the process $X=(X_n)_{n\in\Z}$.

For bounded functionals, we can adapt the result proved for the $L_1$-NED case (see \cite{Wendler11}, Lemma~3.5) to the $L_2$-NED.  
For this, we introduce the `$p$-variation` condition that dates back to \cite{Denker86}, also 
used in \cite[Definition 1.4]{Wendler11}. 
As already noticed there, 
the $p$-variation condition is similar to the notion of $p$-continuity of \cite[Definition 2.10]{Borovkova01}.
\begin{definition} \label{def:p_variation}
Let $(X_n)_{n \in \Z}$ be a stationary process. 
For $p>0$, a function $g: \R \rightarrow \R$ satisfies the $p$-variation condition (with respect to the distribution of $X_0$), if there exists a constant $c$ such that for any $\epsilon>0$
\[ \E[ \sup_{x \in \R: \lvert x - X_0 \rvert \leq \epsilon} \lvert g(x) - g(X_0) \rvert^p ] \leq  c~\epsilon^p.\]
\end{definition}
Then, we can adapt Lemma~3.5 in~\cite{Wendler11} to the $L_2$-NED case. 
\begin{lemma} \label{lemma:L2NED_bd_fct}
Let $(X_n)_{n \in \Z}$ be $L_2$-NED with constants $\nu(k), k \in \N$, on a stationary process $(Z_n)_{n \in \Z}$.
    \begin{itemize}
    \item[(i)] Let $g$ be a function bounded by $K$ such that $g$ satisfies the 2-variation condition with constant $L$. Then,
    \begin{equation}\label{eq:CS-L2Ned}
        (g(X_n))_{n \in \Z} \quad\text{is} \quad L_2-\text{NED with constants} \quad\sqrt{\left(  L + 4K^2 \right) \nu(k)},\; k \in \N. 
    \end{equation}
    \item[(ii)] For the specific case of $g$ being the indicator function $g(x) := \1_{\left( x \leq t \right)}$ ($t \in \R$), the result \eqref{eq:CS-L2Ned} holds under the 1-variation condition (instead of the 2-variation). 
    \end{itemize}
\end{lemma}

Note that the result (ii) will be needed in the proof of Theorem~\ref{thm:biv_FCLT}.
\begin{remark}
    Recall that a sufficient condition for the indicator function to satisfy the 1-variation condition with respect to the distribution of $X_0$, is the Lipschitz-continuity of the distribution function of $X_0$, as shown in \cite[Example 1.5]{Wendler11}.
\end{remark}
Turning to unbounded functions, rather than considering $p$-variation (or $p$-continuity), we assume a certain geometry on these functions, namely convexity.
\begin{lemma} \label{lemma:NED_for_polys_general}
Let a process $X = (X_n)_{n \in \Z}$ be $L_2$-NED with constants $\nu(k)$ on a stationary process $ (Z_n)_{ n \in \Z}$. 
Consider a positive, convex function $f$ with derivative $f'$ such that, on $\R^{+}$, $f\times f'$ is convex and positive, and $(f\times f')^p$ is convex with $p>1$.
Then, a sufficient condition for the $L_2$-NED of $(f(X_n))_{n \in \Z}$ with constants $\tilde{\nu}(k)= O(\nu^{1/2}(k))$ is :
{\small
\[ \E[(f(\lvert X_n \rvert) f'(\lvert X_n \rvert))^p] < \infty \text{~and~} X \text{~is~} L_q-\text{NED for $q>1$ s.t.} \; \frac{1}{p} + \frac{1}{q} = 1, \text{ with ~} \tilde{\nu}(k) = O(\nu(k)).\]
}
\end{lemma}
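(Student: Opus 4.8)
The plan is to bound the $L_2$-NED constant of $(f(X_n))$ directly, exploiting that the conditional expectation is the $L_2$-orthogonal projection. First I would reduce to the nonnegative case: since $|\cdot|$ is $1$-Lipschitz and $\E[\cdot\vert\mathcal{F}_{Z,-k,+k}]$ is an $L_p$-contraction, we have $\big\||X_0|-\E[|X_0|\vert\mathcal{F}_{Z,-k,+k}]\big\|_p \leq \big\||X_0|-|\E[X_0\vert\mathcal{F}_{Z,-k,+k}]|\big\|_p \leq \big\|X_0-\E[X_0\vert\mathcal{F}_{Z,-k,+k}]\big\|_p$, so $(|X_n|)$ inherits the $L_2$- and $L_q$-NED of $(X_n)$ with the same orders. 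This matches the hypotheses, which are stated in terms of $f(|X_n|)f'(|X_n|)$, and lets me work entirely on $\R^{+}$, where $f$, $f\times f'$ and $(f\times f')^p$ carry the assumed convexity and positivity. Writing $\hat{X}_k := \E[X_0\vert\mathcal{F}_{Z,-k,+k}]\geq 0$ and using that $\E[f(X_0)\vert\mathcal{F}_{Z,-k,+k}]$ is the best $L_2$-approximation of $f(X_0)$ among $\mathcal{F}_{Z,-k,+k}$-measurable functions, I may replace it by the competitor $f(\hat{X}_k)$:
\[ \big\|f(X_0)-\E[f(X_0)\vert\mathcal{F}_{Z,-k,+k}]\big\|_2 \;\leq\; \big\|f(X_0)-f(\hat{X}_k)\big\|_2. \]
(The competitor lies in $L_2$ because $f^2$ is convex, being the square of a positive convex function, so $\E[f(\hat{X}_k)^2]\leq\E[f(X_0)^2]<\infty$ by Jensen.) It then suffices to bound the right-hand side by $O(\nu^{1/2}(k))$.

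The crux is a pointwise inequality valid for $a,b\geq 0$. Since $f\geq 0$ one has $|f(a)-f(b)|\leq f(a)+f(b)$, whence $(f(a)-f(b))^2\leq |f(a)^2-f(b)^2|$; applying the mean value theorem to $f^2$ (whose derivative is $2ff'$) and using that $ff'$ is nondecreasing on $\R^{+}$ (it is positive there, and $f,f'$ are positive and nondecreasing because $f$ is positive and convex with $f'>0$ forced by positivity of $ff'$), I get
\[ (f(a)-f(b))^2 \;\leq\; 2\,(ff')(a\vee b)\,|a-b| \;\leq\; 2\big((ff')(a)+(ff')(b)\big)\,|a-b|. \]
Taking $a=X_0$, $b=\hat{X}_k$, integrating, and splitting with H\"older at the conjugate exponents $p,q$ yields
\[ \E\big[(f(X_0)-f(\hat{X}_k))^2\big] \;\leq\; 2\,\big\|(ff')(X_0)+(ff')(\hat{X}_k)\big\|_p\;\big\|X_0-\hat{X}_k\big\|_q. \]

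For the increment factor, $\|X_0-\hat{X}_k\|_q$ is exactly the $L_q$-NED constant of $X$, hence $O(\nu(k))$. For the moment factor, Minkowski reduces it to $\|(ff')(X_0)\|_p$, finite by the hypothesis $\E[(f(|X_n|)f'(|X_n|))^p]<\infty$, plus $\|(ff')(\hat{X}_k)\|_p$, which I control uniformly in $k$ by conditional Jensen using convexity of $(ff')^p$: $\big((ff')(\hat{X}_k)\big)^p=\big((ff')(\E[X_0\vert\mathcal{F}_{Z,-k,+k}])\big)^p\leq\E\big[((ff')(X_0))^p\vert\mathcal{F}_{Z,-k,+k}\big]$, so $\E[((ff')(\hat{X}_k))^p]\leq\E[((ff')(X_0))^p]<\infty$. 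Combining, $\E[(f(X_0)-f(\hat{X}_k))^2]=O(\nu(k))$, and therefore the NED constant of $(f(X_n))$ is $\tilde\nu(k)=O(\nu^{1/2}(k))$, as claimed.

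The main obstacle is obtaining the right \emph{exponents}, i.e. controlling the conditional-expectation term $\hat{X}_k$ uniformly in $k$ without over-charging the NED. A naive Lipschitz bound $|f(a)-f(b)|\le\sup|f'|\,|a-b|$ would pair $f'$ with the increment at a single exponent and effectively demand $L_{2q}$-NED, which is stronger than what is available. Routing through $f^2$ so that the product $ff'$ (rather than $f'$) appears is precisely what places the moment factor at exponent $p$ and the increment at the conjugate exponent $q$, matching the $L_q$-NED hypothesis; and it is the convexity of $(ff')^p$ that keeps the $\hat{X}_k$-moment from degrading as $k$ grows, via the Jensen step. The remaining verifications (the $\R^{+}$ reduction, the $L_2$-membership of $f(\hat{X}_k)$, and monotonicity of $ff'$) are routine consequences of the convexity and positivity assumptions.
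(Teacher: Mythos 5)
Your proof is correct and follows essentially the same route as the paper's: the projection property of conditional expectation, the bound $(f(a)-f(b))^2\le\lvert f^2(a)-f^2(b)\rvert$, the mean value theorem applied to $f^2$ so that $ff'$ appears, H\"older at the conjugate pair $(p,q)$, and conditional Jensen via convexity of $(ff')^p$ to control the $\hat{X}_k$-moment uniformly in $k$. The only cosmetic differences are that you bound $(ff')$ at the intermediate point by monotonicity ($(ff')(a\vee b)\le (ff')(a)+(ff')(b)$) where the paper uses convexity of $ff'$ at the convex combination, and that you make the reduction to $\lvert X_n\rvert$ explicit up front (implicitly using that $f(X_n)=f(\lvert X_n\rvert)$, as in the intended application $f=\lvert\cdot\rvert^r$) where the paper folds the absolute values in later.
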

Applying Lemma~\ref{lemma:NED_for_polys_general} for the function $f(\cdot) = \lvert \cdot \rvert^p$, $p \in \Z$, we obtain:
\begin{lemma} \label{cor:NED_for_polys}
Let a process $X = (X_n)_{n \in \Z}$ be $L_2$-NED with constants $\nu(k)$ on a stationary process $ (Z_n)_{ n \in \Z}$. Then, for a given integer $r \geq 1$, a sufficient condition for the $L_2$-NED of $(\lvert X_n \rvert^r)_{n \in \Z}$ (with constants $\tilde{\nu}(k)$ of order $\tilde{\nu}(k) = O(\nu(k)^{1/2})$) is, for any choice of $p,q \in (1, \infty)$ s.t. $\frac{1}{p} + \frac{1}{q} = 1$, 
\[ \E[ \lvert X_n \rvert^{p (2r-1)}] < \infty, \text{~and~} L_q-\text{NED of~} X \text{~with constants of order~} O(\nu(k)).\]
\end{lemma}
As an application, let us consider the class of ARMA($p$, $q$) processes, introduced in Section~\ref{ssec:ARMA}.
\begin{lemma} \label{lemma:ARMA_NED_Xp}
Let $X$ be a causal ARMA($p$, $q$) process as defined in \eqref{eq:ARMA_pq}. Then, for any integer $r>0$, $(\lvert X_0 \rvert^r)$ is geometrically $L_2$-NED if $\E[\lvert X_0 \rvert^{2r}] < \infty$. 
\end{lemma}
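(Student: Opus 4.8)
The plan is to exploit the causal moving-average representation of the ARMA process and the geometric decay of its coefficients, and then pass from $X$ to $|X|^r$ via Corollary~\ref{cor:NED_for_polys}, choosing the conjugate exponents so that the moment requirement reduces to exactly the hypothesis $\E[|X_0|^{2r}]<\infty$ (as anticipated in the remark following that corollary). Under the causality condition \eqref{eq:causal}, one has $X_0 = \sum_{j=0}^\infty \psi_j \epsilon_{-j}$ with $\sum_j |\psi_j| < \infty$, and, since $\Phi$ has all its roots strictly outside the closed unit disk, $\Psi(z)=\Theta(z)/\Phi(z)$ is analytic on a disk of radius $1+\delta_0>1$, so that $|\psi_j| \le C\rho^j$ for some $\rho \in (0,1)$. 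I would take as approximant the truncation $\tilde X_0^{(k)} := \sum_{j=0}^k \psi_j \epsilon_{-j}$, which is $\mathcal{F}_{\epsilon,-k,0}$- hence $\mathcal{F}_{\epsilon,-k,k}$-measurable. Setting $Y_k := X_0 - \tilde X_0^{(k)} = \sum_{j>k}\psi_j\epsilon_{-j}$ and using that $\tilde X_0^{(k)}$ is measurable together with the conditional-Jensen contraction $\|\E[\cdot\,|\,\mathcal{F}]\|_p \le \|\cdot\|_p$, one gets $\|X_0 - \E[X_0\,|\,\mathcal{F}_{\epsilon,-k,k}]\|_p = \|Y_k - \E[Y_k\,|\,\mathcal{F}_{\epsilon,-k,k}]\|_p \le 2\|Y_k\|_p$ for every $p \ge 1$.

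The next step is to show that $(X_n)$ is geometrically $L_{2r}$-NED. By Minkowski's inequality and stationarity of the innovations, $\|Y_k\|_{2r} \le \|\epsilon_0\|_{2r}\sum_{j>k}|\psi_j| = O(\rho^k)$, provided $\E[|\epsilon_0|^{2r}] < \infty$; combined with the previous bound this gives geometric $L_{2r}$-NED of $(X_n)$, with constants $\nu(k) = O(\rho^k)$. I would then apply Corollary~\ref{cor:NED_for_polys} with the conjugate pair $p = \frac{2r}{2r-1}$, $q = 2r$ (both in $(1,\infty)$ for integer $r\ge 1$), for which the requirement $\E[|X_0|^{p(2r-1)}]<\infty$ reads exactly $\E[|X_0|^{2r}]<\infty$, while the NED requirement is precisely the geometric $L_{2r}$-NED just obtained (which also supplies the base $L_2$-NED). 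The corollary then yields $L_2$-NED of $(|X_0|^r)$ with constants $O(\nu(k)^{1/2}) = O(\rho^{k/2})$, i.e. geometric, as claimed. Alternatively, one may bypass the corollary and bound $\||X_0|^r - |\tilde X_0^{(k)}|^r\|_2$ directly, using $||a|^r-|b|^r| \le r|a-b|\max(|a|,|b|)^{r-1}$ and Hölder with exponents $(r,\frac{r}{r-1})$, which again reduces the estimate to $\|Y_k\|_{2r}$ times a $2r$-th moment factor bounded uniformly in $k$.

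The main obstacle is the moment transfer $\E[|X_0|^{2r}]<\infty \Rightarrow \E[|\epsilon_0|^{2r}]<\infty$, on which the truncation bound crucially depends. The reverse implication is immediate from Minkowski applied to the causal representation, but the forward one is delicate for merely mixing (dependent) innovations, where the usual conditioning argument used for independent noise is unavailable; I would establish it from the finite-order relation $\Phi(B)X_t = \Theta(B)\epsilon_t$ in \eqref{eq:ARMA_pq} together with the moment equivalence for causal ARMA. Everything else — the geometric decay of the $\psi_j$, the conditional-expectation contraction, and the elementary norm inequalities — is routine.
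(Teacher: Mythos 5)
Your proof follows essentially the same route as the paper's: the causal moving-average representation with geometrically decaying $\psi_j$, truncation of the series plus the conditional-expectation contraction to obtain geometric $L_{2r}$-NED of $(X_n)$ via Minkowski, and then Corollary~\ref{cor:NED_for_polys} with the extreme conjugate pair $p=\tfrac{2r}{2r-1}$, $q=2r$ so that the moment requirement becomes exactly $\E[\lvert X_0\rvert^{2r}]<\infty$. The one point you single out as the main obstacle --- transferring $\E[\lvert X_0\rvert^{2r}]<\infty$ to $\E[\lvert \epsilon_0\rvert^{2r}]<\infty$ --- is not resolved in the paper either (its proof inserts $\|\epsilon_0\|_{2r}$ into the Minkowski bound while remarking only that $\epsilon_0$ has a finite second moment), so your caution identifies a gap the published argument shares, implicitly assuming the $2r$-th moment of the innovations, rather than a defect specific to your proposal.
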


\section{Proofs} \label{sec:proofs}

The main theorem is proven in Section~\ref{ssec:main_proof}, while the corollaries in Section~\ref{ssec:cor_proofs}.
Proofs of the auxiliary results (presented in Section~\ref{sec:aux_res}) are given in Section~\ref{ssec:aux_res_proofs}. 
 
\subsection{Proof of Theorem~\ref{thm:biv_FCLT}} \label{ssec:main_proof}

As described in Section~\ref{sec:asympt_results}, the proof consists of three main steps. The first one, more involved, is split into two parts.  

{\sf Step 1: Asymptotic representations for the two sample estimators} \\[1ex]
$\mathbf{1a):}$ {\sf A Bahadur representation for the sample quantile}\\[.7ex]
As explained in Section~\ref{sec:asympt_results} the main idea in the Bahadur representation is to approximate the sample quantile by an equivalent estimator based on iid rv, i.e. showing that the dependence is asymptotically negligible. 
The $L_p$-NED of $(X_n)$ allows to approximate these by functionals of finitely many $Z_n$, and the $p$-variation condition ensures that this also holds for $f(X_n)$. Coupling techniques are then used to show that short-range dependent variables have the same behavior as iid ones (for this, mixing conditions are also necessary). 
The choice of mixing conditions in Theorem~\ref{thm:biv_FCLT} on the underlying process comes from the use of the Bahadur representation for NED processes as given in \cite[Theorem~1]{Wendler11}, for which we need to verify the following conditions. \\
$(i)$ Let $g(x,t) := \1_{(x \leq t)}$. It is straightforward to check that $g$ is non-negative, bounded, measurable, and non-decreasing in the second variable. The function $g$ also satisfies the variation condition uniformly in some neighbourhood of $q_X(p)$ if it is Lipschitz-continuous. 
The latter follows from condition $(C_2^{~'})$ in a neighbourhood of $q_X(p)$.\\ 
$(ii)$ The differentiability of $\E[g(X,t)]= F_X(t)$ and positivity of its derivative at $t=q_X(p)$ are given by condition $(C_1^{+})$ at $q_X(p)$. \\
$(iii)$ The condition 
\[ \lvert F_X(x)-F_X(q_X(p)) - f_X(q_X(p)) (x-q_X(p)) \rvert = o \left( \lvert x - q_X(p)\rvert^{3/2} \right) \quad \text{as~} x\rightarrow q_X(p) \] 
is fulfilled as, by Assumption $(C_2^{~'})$, $F_X$ is twice differentiable in $q_X(p)$.\\ 
$(iv)$ As $(Z_n)_{n \in \Z}$ is stationary and $\phi$-mixing (Assumption $(D1)$), $(X_n)_{n \in \Z}$, being a function of $(Z_n)_{n \in \Z}$, is also stationary and ergodic. \\
$(v)$ Lastly, the process exactly fulfills the conditions on the mixing rate on the underlying process $(Z_n)_{n \in \Z}$  by assumption $(D1)$. Indeed taking 
$\beta := \min{(s-3, \kappa)}>3$, then, $\beta(n) = O(n^{-\beta})$ and $\nu(k) = O(k^{-(\beta+3)})$ as $\beta>3$. 
Further, the $L_1$-NED is implied by the assumption of $L_2$-NED (Assumption $(D1)$) at the exact same rate:
\begin{equation} \label{eq:NED-condition}
\| X_n -  \E[X_n \vert \mathcal{F}_{n-k}^{n+k}] \|_1 \leq \| X_n - \E[X_n \vert \mathcal{F}_{n-k}^{n+k}] \|_2 = O(k^{-(\beta +3)}).
\end{equation}
Thus, we can use the version of the Bahadur representation given in \cite[Theorem~1]{Wendler11}, replacing (for our purposes) the exact remainder bound by $o_P(1/\sqrt{n})$, and write, as $n \rightarrow \infty$,
\begin{equation} \label{eq:Bahadur_qn_Wendler}
q_n(p)- q_X(p) + \frac{F_X(q_X(p))-F_n(q_X(p))}{f_X(q_X(p))} = o_P(1/\sqrt{n}).
\end{equation}

$\mathbf{1b):}$ {\sf Representation of the $r$-th absolute centred sample moment}\\[.7ex]
The representation is given in Proposition~\ref{prop:Abs_central_mom_asympt_garch} (which proof can be found in Section~\ref{ssec:aux_res_proofs}). So, we simply need to check that the respective conditions (stationarity, finite $2r$-th moment, ergodicity and short-memory) are fulfilled. Let us explain why these conditions hold. 
We have seen in $(iv)$ above that $(X_n)$ is stationary.
The moment condition is fulfilled by assumption due to $(M_r)$ and, for $r=1$, we have $(C_0)$ at $\mu$ by assumption too.
To prove the ergodicity and short-memory property, we use a classical CLT for functionals of $\phi$-mixing processes, \cite[Theorem 21.1]{Billingsley68}. It means to check that its conditions are fulfilled. 
  We have $\E[ \lvert X_0 \rvert^{2}] < \infty$, by $(M_r), r \geq 1$.
  Moreover, as, by assumption $(D1)$,  we have $L_2$-NED with constants of the order $O(k^{-s}), s>6$,  it holds for those constants that $\displaystyle \sum_{k=1}^{\infty} \nu(k) < \infty$.
  Finally, $\sum_{n=1}^{\infty} \beta(n)^{1/2} < \infty$, since via $(D1)$ the $\phi$-mixing rate is assumed to be of order ${\beta(n) = O(n^{-\kappa}), \kappa>3}$.

{\sf Step 2: Establishing the conditions needed for each component $f(X_i)$ in view of the FCLT}\\[1ex]
Following the representation~\eqref{eq:Abs_central_mom_asympt_garch} of $\hat{m}(X,n,r)$, we introduce tridimensional random vectors $u_j=(u_{j,l}, l=1,2,3)$, for $j \in \N$ (anticipating their use in Step 3 for the FCLT of $U_n (X):=\frac{1}{n} \sum_{j=1}^n u_j$), defined by
\[ u_j = \begin{pmatrix} X_j \\ \lvert X_j  \rvert^r - m(X,r) \\ \frac{p - \1_{(X_j \leq q_X(p))}}{f_X(q_X(p))}  \end{pmatrix}. \]
The idea is then to apply a multivariate FCLT, which we adapt from 
\cite[Theorem 3.1]{DeJong00} / \cite{Davidson02} (a multivariate version of Theorem 1.2) to our needs. Their result is not restricted to stationary processes, but, for us, this is sufficient. This is why our conditions, by stationarity, will hold uniformly in $t$. It is also the reason why we do not need, in the definition of $L_p$-NED, a constant depending on the time $t$ of the process. Let us present this FCLT here, so that it clearly states the required conditions that each random process $(u_{j,l})_{j\in\N}$, for $l=1,2,3$, respectively, has to satisfy, in view of its application.

\begin{theorem} \label{thm:mult_FCLT}
Consider a d-dimensional stationary random process $(u_j, j\in \N)$ 
where: 
\\(a) Each of the $d$ components is an $L_2$-NED process with constants $\nu(k) = O(k^{-\beta}), \beta >1/2$, with respect to the same (univariate) process $(V_t)_{t \in \Z}$, which is either $\alpha$-mixing of order $\alpha(n) = O(n^{-\tilde{s}})$ for $\tilde{s}>\check{s}/(\check{s}-2)$, $\check{s}>2$, or $\phi$-mixing of order $\phi(n) = O(n^{-\tilde{s}})$ for $\tilde{s}> \check{s}/(2\check{s}-2)$, $\check{s}\geq 2$; 
\\(b) For this choice of $\check{s}$, it must hold that $\| u_{j,l} \|_{\check{s}} < \infty, ~\forall j,l$;
\\(c) $\Var(\sum_{j=1}^n u_{j,l})/n := \sigma_{n,l}/n \rightarrow \sigma_l^2 >0$, $\forall l$, as $n\to\infty$.
\\Then, the series $\Gamma = \sum_{j \in \Z} \Cov( u_0, u_j)$ converges (coordinate wise) absolutely and a FCLT holds for $U_n := \frac{1}{n} \sum_{j=1}^n u_j$, {\it i.e.}
\[ \sqrt{n}t U_{[nt]} \overset{D_d[0,1]}{\rightarrow} W_{\Gamma}(t), \quad\mbox{as}\;n\to\infty,\]
where the convergence takes place in the $d$-dimensional Skorohod space $D_d[0,1]$ and  $(W_{\Gamma}(t), t \in [0,1])$ is a $d$-dimensional Brownian motion with covariance matrix $\Gamma$, {\it i.e.} it has mean 0 and $\Cov(W_{\Gamma}(u), W_{\Gamma}(t)) = \min(u,t) \Gamma$.
\end{theorem}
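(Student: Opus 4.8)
The plan is to obtain this $d$-dimensional functional limit theorem from the scalar functional/central limit theorem for $L_2$-NED functionals of a mixing base that underlies the cited univariate statement (Theorem~1.2 in \cite{Davidson02}, of which the present result is the vector version), via the Cramér--Wold device together with a tightness argument. Throughout one may take each coordinate of $u_j$ to be centred, as in the univariate theorems. The first step is to show that $\Gamma = \sum_{j \in \Z} \Cov(u_0, u_j)$ is coordinatewise absolutely convergent. This rests on the standard covariance inequality for $L_2$-NED functionals of a mixing sequence: for each pair $(k,l)$ and each lag $j$, $\lvert \Cov(u_{0,k}, u_{j,l}) \rvert$ is bounded by the sum of a mixing term (the mixing coefficient of $(V_t)$ at lag $\lfloor j/2 \rfloor$ raised to a power determined by $s$, times the $L_s$-norms $\| u_{0,k} \|_s, \| u_{j,l} \|_s$) and an approximation term controlled by $\nu(\lfloor j/2 \rfloor)$. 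The rate hypotheses --- $\nu(k) = O(k^{-\beta})$ with $\beta > 1/2$ and $\tilde s > s/(s-2)$ (strong mixing) or $\tilde s > s/(2s-2)$ ($\phi$-mixing) --- are exactly those making $\sum_j \lvert \Cov(u_{0,k}, u_{j,l}) \rvert < \infty$; this is the same summation already implicit in the univariate theorem. By stationarity it follows that $\Var(\sum_{j=1}^n u_{j,l})/n \to \Gamma_{ll}$, so the hypothesis $\sigma_l^2 > 0$ identifies $\sigma_l^2 = \Gamma_{ll}$, and likewise $\tfrac1n \Cov(\sum_{j=1}^n u_{j,k}, \sum_{j=1}^n u_{j,l}) \to \Gamma_{kl}$.

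For the convergence of the finite-dimensional distributions I would fix times $0 \le t_1 < \dots < t_m \le 1$ and, by the Cramér--Wold device in $\R^{dm}$, reduce matters to showing that every real linear combination $\sum_{i=1}^m c_i^T \big(\sqrt n\, t_i\, U_{[nt_i]}\big)$, with $c_i \in \R^d$, converges in distribution to $\sum_{i=1}^m c_i^T W_\Gamma(t_i)$. The key observation is that this linear combination can be rewritten as a single scalar partial sum
\[ \sum_{i=1}^m c_i^T \frac{1}{\sqrt n}\sum_{j=1}^{[nt_i]} u_j = \frac{1}{\sqrt n}\sum_{j=1}^{[nt_m]} d_{n,j}^T u_j, \qquad d_{n,j} := \sum_{i\,:\,[nt_i]\ge j} c_i, \]
where the coefficient vectors $d_{n,j}$ are piecewise constant in $j$ and uniformly bounded by $\sum_i \lvert c_i \rvert$. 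Since finite linear combinations of $L_2$-NED processes are again $L_2$-NED on the same base $(V_t)$ with the same rate, and since $\| d_{n,j}^T u_j \|_s \le (\sum_i \lvert c_i \rvert)\max_l \| u_{j,l}\|_s < \infty$ uniformly, the scalar triangular array $(d_{n,j}^T u_j)$ satisfies the hypotheses of the scalar CLT for NED functionals of mixing sequences. Applying it yields a Gaussian limit, and a direct computation of $\lim_n \tfrac1n \Var\big(\sum_{j=1}^{[nt_m]} d_{n,j}^T u_j\big)$ using the absolute convergence from the first step identifies the limiting variance with $\Var\big(\sum_i c_i^T W_\Gamma(t_i)\big) = \sum_{i,i'} (c_i^T \Gamma c_{i'})\min(t_i,t_{i'})$, giving the claimed finite-dimensional limits.

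It remains to establish tightness. Taking $c = e_l$ (a single direction at a single time) the univariate functional theorem applies to the scalar process $\tfrac{1}{\sqrt n}\sum_{j=1}^{[nt]} u_{j,l}$ and shows it converges in $D_1[0,1]$ to a (continuous) Brownian limit; hence each coordinate sequence is $C$-tight, and joint $C$-tightness of the $\R^d$-valued sequence in $D_d[0,1]$ follows. Combining the finite-dimensional convergence with $C$-tightness yields weak convergence of $\sqrt n\, t\, U_{[nt]}$ in $D_d[0,1]$ to the centred $d$-dimensional Gaussian process with $\Cov(W_\Gamma(u), W_\Gamma(t)) = \min(u,t)\Gamma$, i.e. the Brownian motion $W_\Gamma$. \textbf{The main obstacle} is precisely the finite-dimensional step: a naive application of Cramér--Wold using a single direction $\lambda$ at all times is \emph{not} sufficient to pin down the joint (cross-time, cross-coordinate) law of the limit, so one is forced to handle combinations with direction vectors $c_i$ that vary across the times $t_i$; this is what produces the triangular NED array with time-varying bounded coefficients above, and the real analytic content is the CLT for that array together with the matching variance computation. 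A secondary (but standard) point is that in the Skorokhod $J_1$ topology coordinatewise tightness need not give joint tightness in general; this is rescued here because every limit is continuous, so the relevant notion is $C$-tightness, which does decompose over coordinates.
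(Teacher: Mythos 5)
Your proposal is essentially correct, but it takes a different route from the paper for a simple reason: the paper does not prove Theorem~\ref{thm:mult_FCLT} at all. As stated in Remark~\ref{rmk:FCLT}, the theorem is presented as a special case (stationary, with time-independent NED constants) of Theorem~3.2 of \cite{DeJong00}, which is itself the multivariate companion of the univariate Theorem~3.1 there (and of Theorem~1.2 of \cite{Davidson02}); the only ``proof'' in the paper is the observation that the stated hypotheses imply those of the cited result. What you do instead is reconstruct how such a multivariate statement is derived from the scalar one: Cram\'er--Wold in $\R^{dm}$ with direction vectors varying across the time points, reduction of the resulting linear combination to a scalar triangular NED array with piecewise-constant bounded coefficients, the covariance-inequality argument for absolute summability of $\Gamma$, and $C$-tightness obtained coordinatewise from the univariate FCLT. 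This is sound, and you correctly flag the two genuine subtleties --- that a single-direction Cram\'er--Wold device does not determine the joint cross-time, cross-coordinate law, and that coordinatewise $J_1$-tightness does not give joint tightness in $D_d[0,1]$ except through continuity of the limit. The trade-off is that your argument still leans on the scalar CLT for triangular arrays with array-dependent scaling constants (precisely the form in which \cite{DeJong00} state their univariate result), so in substance you are re-deriving their Theorem~3.2 rather than avoiding it; the paper's citation buys brevity, your sketch buys self-containedness and makes explicit where each hypothesis (the rates on $\nu(k)$ and on the mixing coefficients, the $L_s$-bounds, the variance condition) is consumed.
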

\vspace{-3ex}
\noindent Choosing $\check{s}=2$, 
let us show that all the assumptions hold for each of the components of $(u_j)$:
\\- Condition (a):
Let us first comment on the order of the $L_2$-NED constants. 
\\For $u_{j,1} = X_j $, by Assumption $(D1)$, it is of order $O(k^{-s})$.
\\For $u_{j,2} = \1_{(X_j \leq q_X(p))}$, by $(D1)$ in conjunction with Lemma~\ref{lemma:L2NED_bd_fct} ii), we have an order of $O(k^{-s/2})$. 
\\Finally, for $u_{j,3} = \lvert X_j \rvert^r$, it is $O(k^{-\gamma})$ by Assumption $(D2)$. 
\\ As $s>6$ in $(D1)$ and $\gamma>2$ in $(D2)$, the required order for  Theorem~\ref{thm:mult_FCLT} is fulfilled.\\
The mixing rate for $(u_{j,l})$, for $l=1,2,3$, is, by Assumption $(D1)$, $O(n^{-\kappa}), \kappa >3$. As we chose $\check{s}=2$, the required rate in Theorem~\ref{thm:mult_FCLT} is $O(n^{-\tilde{s}})$ for $\tilde{s}> \check{s}/(2\check{s}-2)=1$. Thus $\kappa>3$ fulfills this requirement.
\\- Condition (b):
We treat each component of $u_{j,l}, l=1,2,3$, separately. 
By assumption of $(M_r), r \geq 1$, it holds that $\E[\lvert X_n \rvert^s] < \infty$ (with $s=2$, as chosen). Further, as $\1_{(X_j \leq q_X(p))}$ is bounded,  $\E[\lvert \1_{(X_j \leq q_X(p))} \rvert^s] < \infty$ holds. Finally,  
$\E[\lvert X_n \rvert^{rs}] < \infty$ holds using again $(M_r), r \geq 1$.
\\- Condition (c):
For $(u_{j,1})$, the convergence of the variance of the partial sums was already shown in Step~1b) (using Theorem 21.1 of \cite{Billingsley68}). By the exact same argument, using once again Billingsley's theorem, (c) holds for $u_{j,2}$ and $u_{j,3}$. 

{\sf Step 3: Multivariate FCLT}\\
Having checked the conditions for the FCLT (Theorem~\ref{thm:mult_FCLT}) in Step 2, we can now apply a trivariate FCLT for $(u_j)_j$.
Using the Bahadur representation \eqref{eq:Bahadur_qn_Wendler} of the sample quantile (ignoring the rest term for the moment), we can state:
\begin{equation} \label{eq:asympt_MAD_trivariate_normal}
 \sqrt{n} \frac{1}{n} \sum_{j=1}^{[nt]} u_j =   \sqrt{n}~t \begin{pmatrix} \bar{X}_{[nt]}  \\ \frac{1}{[nt]} \sum_{j=1}^{[nt]} \lvert X_j \rvert^r - m(X,r) \\ \frac{p-F_{[nt]}(q_X(p))}{f_X(q_X(p))} \end{pmatrix} \overset{D_3[0,1]}{\rightarrow}  \textbf{W}_{\tilde{\Gamma}^{(r)}} (t) \quad \text{~as~} n \rightarrow \infty,
\end{equation}
where $\left(\textbf{W}_{\tilde{\Gamma}^{(r)}}(t), t \in [0,1]\right)$ is the three-dimensional Brownian motion with covariance matrix ${\tilde{\Gamma}^{(r)}} \in \R^{3\times 3}$, {\it i.e.} the components ${\tilde{\Gamma}^{(r)}}_{ij}, 1\leq i,j \leq 3$, satisfy the same dependence structure as for the random vector $(U,V,W)^T$ described in $(Co)$, with all series being absolutely convergent. 
By the multivariate Slutsky theorem, we can add $\begin{pmatrix} 0 \\ 0 \\ R_{[nt],p} \end{pmatrix}$ to the asymptotics in \eqref{eq:asympt_MAD_trivariate_normal} without changing the resulting distribution (since $\sqrt{n} R_{[nt],p} \overset{P}\rightarrow 0$ as $n\to\infty$). Hence, we obtain, as $n \rightarrow \infty$,
{\small
\begin{eqnarray} \label{eq:conv_semifinal}
\hspace*{-3ex} \sqrt{n}~t \begin{pmatrix} \bar{X}_{[nt]}  - \mu \\ \frac{1}{[nt]} \sum_{j=1}^{[nt]} \lvert X_j \rvert^r - m(X,r) \\ \frac{p-F_{[nt]}(q_X(p))}{f_X(q_X(p))} \end{pmatrix} +  \sqrt{n}~t \begin{pmatrix} 0 \\ 0 \\ R_{[nt],p} \end{pmatrix} &=&  \sqrt{n}~t \begin{pmatrix} \bar{X}_{[nt]}  - \mu \\ \frac{1}{[nt]} \sum_{j=1}^{[nt]} \lvert X_j \rvert^r - m(X,r) \\ q_{[nt]} (p) - q_X(p) \end{pmatrix}  \nonumber\\
&\overset{D_3[0,1]}{\rightarrow}&  \textbf{W}_{\tilde{\Gamma}^{(r)}} (t).
\end{eqnarray} 
}
Then, we apply to  \eqref{eq:conv_semifinal} the multivariate continuous mapping theorem using the function $f(x,y,z) \mapsto (ax+y, z)$ with $a= -r \E[(X-\mu)^{r-1} \sgn(X-\mu)^r]$. Further, by Slutsky's theorem, we can add to $ax+y$ a rest of $o_P(1/\sqrt{n})$ without changing the limiting distribution.
So, we  obtain, as $n \rightarrow \infty$,
\begin{align*} 
\sqrt{n}~t &\begin{pmatrix} a (\bar{X}_{[nt]}  - \mu) + \frac{1}{[nt]} \sum_{j=1}^{[nt]} \lvert X_j \rvert^r - m(X,r) + o_P(1/\sqrt{n}) \\ q_{[nt]}(p) - q_X(p) \end{pmatrix} 
\\ &=  \sqrt{n}~t \begin{pmatrix} \hat{m}(X,[nt],r) - m(X,r) \\ q_{[nt]}(p) - q_X(p) \end{pmatrix} \overset{D_2[0,1]}{\rightarrow}  \textbf{W}_{\Gamma^{(r)}} (t), \numberthis \label{eq:asymptot1_MAD}
\end{align*} 
where $\Gamma^{(r)}$ follows from the specifications of $\tilde{\Gamma}^{(r)}$ above and the continuous mapping theorem. \hfill $\Box$

\subsection{Proofs of Corollaries} \label{ssec:cor_proofs}

We start with the proof of Corollary~\ref{cor:biv_FCLT_abs_mixing}, where we need to show why Theorem~\ref{thm:biv_FCLT} also holds in the case of an underlying absolutely regular process. The proofs of corollaries~\ref{cor:biv_CLT}, ~\ref{cor:biv_CLT_iid} and~\ref{thm:augm_GARCH_pq_asympt_qn}, are a direct consequence of Theorem~\ref{thm:biv_FCLT}, so can be omitted. 
The proof of Corollary~\ref{cor:biv_FCLT_Lq_NED} mainly consists of establishing sufficient conditions for the $L_2$-NED of $(\lvert X_n \rvert^r)_{n \in \Z}$, which will be done in (the proof of) Lemma~\ref{cor:NED_for_polys} (proven in Section~\ref{ssec:aux_res_proofs}). We end with the proof of Corollary~\ref{cor:ARMA_pq_asympt_qn} for which the main work consists of proving the $L_2$-NED of $\lvert X_0 \rvert^r, r>0$, for ARMA processes.
%
\begin{proof}{\bf of Corollary~\ref{cor:biv_FCLT_abs_mixing}.}
The proof follows the one of Theorem~\ref{thm:biv_FCLT}, but we need to adapt (parts of) Steps~1b) and~2 to the setting of Corollary~\ref{cor:biv_FCLT_abs_mixing}, {\it i.e.} for an underlying absolute regular process. Thus, we revisit these two steps. \\
{\sf \small Step 1b): Representation of the $r$-th absolute centred sample moment - Conditions.}\\
Given the representation given in Proposition~\ref{prop:Abs_central_mom_asympt_garch}, we need to check that the respective conditions, stationarity, finite $2r$-th moment, ergodicity and short-memory, are fulfilled. 
Only the reasoning for the short-memory property differs from the one in Theorem~\ref{thm:biv_FCLT}:
To prove the ergodicity and short-memory property, we verify that the conditions for a CLT of $X_n$ are fulfilled. 
Since absolute regularity implies strong mixing at the same rate, we consider the CLT for functionals of strongly mixing processes; see \cite[Theorem 18.6.2]{Ibragimov71}. 
By choosing $\delta = 2 \frac{1+ \epsilon}{\beta-1}$ in that theorem, we check that the conditions stated there are fulfilled (recall that we defined in Corollary~\ref{cor:biv_FCLT_abs_mixing}, $\beta := \min{(s-3, \kappa)}$). We have $\E[ \lvert X_0 \rvert^{2+\delta}] = \E[ \lvert X_0 \rvert^{2+\frac{2+  2\epsilon}{\beta -1} }] < \infty$ by $(M_{r\frac{\beta + \epsilon}{\beta-1} })$ (as $r\geq 1$). Moreover,
$\sum_{k=1}^{\infty} \| X_0 - \E[X_0 \vert \mathcal{F}_{-k}^{+k}] \|_{(2+\delta)/(1+\delta)} = \sum_{k=1}^{\infty} \| X_0 - \E[X_0 \vert \mathcal{F}_{-k}^{+k}] \|_{1+ \frac{1}{1+\delta}} < \infty$ holds as it is bounded by $\sum_{k=1}^{\infty}  \| X_0 - \E[X_0 \vert \mathcal{F}_{-k}^{+k}]\|_2$ (since $\delta>0$), which is finite by the assumption of $L_2$-NED with rate $O(k^{-s}), s>6$, {\it i.e.} $(D1^*)$. Finally,
$\sum_{n=1}^{\infty} \beta(n)^{\delta / (2+\delta)} < \infty$ holds by construction (the choice of $\delta$ was made in a way that the sum is finite): $(D1^*)$ ensures that $\beta(n) = O(n^{-\kappa})$ and, by the choice of $\beta$ in the corollary, implies $\beta(n) = O(n^{-\beta})$. We assume, w.l.o.g., that $\beta(n) = n^{-\beta}$. 
We then have
\[ \sum_{n=1}^{\infty} \beta(n)^{\delta / (2+\delta)} =  \sum_{n=1}^{\infty} n^{ - \beta \frac{\frac{2+ 2 \epsilon}{\beta-1} }{2 + \frac{2+ 2 \epsilon}{\beta-1} }} \;\text{~and~}\quad \beta \frac{\frac{2+ 2 \epsilon}{\beta-1} }{2 + \frac{2+ 2 \epsilon}{\beta-1} } =  1 +\frac{\epsilon (\beta-1)}{\beta +\epsilon}. \] 
As $\beta:= \min{(s-3, \kappa)}>3$,  $\epsilon>0$, this quantity will always be bigger than $1$ and hence the infinite sum remains summable.
Hence Theorem~18.6.2 in \cite{Ibragimov71} applies in this case for the process $(X_n)_{n \in \Z}$.
\\
{\sf\small Step 2: Conditions for applying the FCLT (Theorem~\ref{thm:mult_FCLT})}\\
In Theorem~\ref{thm:biv_FCLT}, we used Theorem~\ref{thm:mult_FCLT} as multivariate FCLT, which does not only cover the case of underlying $\phi$-mixing processes, but also strong mixing. It means that we can also use it here. Therefore, it comes back to verify Condition $(a^*)$, defined below, and Conditions (b) and (c) given in the proof of Theorem~\ref{thm:biv_FCLT}, Step 2.
\begin{itemize}
\item[$(a^*)$] $L_2$-NED process with constants $\nu(k) = O(k^{-\eta}), \eta >1/2$ (changed from $\beta$ to $\eta$, to avoid notational confusion) on a univariate process $(V_t)_{t \in \Z}$ (the same for all components), which, in this case, is $\alpha$-mixing of order $\alpha(n) = O(n^{-\tilde{s}})$ for any $\tilde{s}> s/(s-2)$ for a $s > 2$.
\end{itemize}
Note that, in comparison to the proof of Theorem~\ref{thm:biv_FCLT}, only the first condition has been adapted. Nevertheless, as the underlying mixing property is different, the proof of conditions (b) and (c) have also to be adapted.
Choosing $s=2\frac{\beta+\epsilon}{\beta-1}$ in Theorem~\ref{thm:mult_FCLT}, we can show that all the assumptions hold for each of the components of $(u_j)$.
\begin{itemize}
\item Condition $(a^*)$:
The order of the $L_2$-NED constants being the same as for the $\phi$-mixing case, see $(D1^*)$ and $(D2)$, the same arguments as in the proof of Theorem~\ref{thm:biv_FCLT} hold.
The mixing rate for $(u_{j,l}), l=1,...3,$ is, by assumption $(D1^*)$, $O(n^{-\kappa}), \kappa >3$, which implies, by definition of $\beta$, that it is also of order $O(n^{-\beta}), \beta>3$. Since we chose $s=2\frac{\beta+\epsilon}{\beta-1}$, we have $\frac{s}{s-2}= \beta \frac{1+ \epsilon/\beta}{1+\epsilon}$ (as $\epsilon>0,\, \beta>3$). The required rate in Theorem~\ref{thm:mult_FCLT} is $O(n^{-\tilde{s}})$ for a $\tilde{s}> s/(s-2)=\beta \frac{1+ \epsilon/\beta}{1+\epsilon} < \beta$, thus $\beta>3$ fulfills this requirement.
\item Condition (b):
 We treat each component of $(u_{j,l}), l=1,2,3$, separately.  By assumption of $(M_{r\frac{\beta+ \epsilon}{\beta-1} }), r \geq 1$, it holds that $\E[\lvert X_n \rvert^s] < \infty$. Further, as $\1_{(X_j \leq q_X(p))}$ is bounded,  $\E[\lvert \1_{\left(X_j \leq q_X(p)\right)} \rvert^s] < \infty$. Finally,  
$\E[\lvert X_n \rvert^{rs}] < \infty$, using again the assumption of $(M_{r\frac{\beta+ \epsilon}{\beta-1} })$, $r\geq 1$. 
\item Condition (c): 
For $(u_{j,1})$, the convergence of the variance of the partial sums was already shown in Step~1b) (using \cite[Theorem 18.6.2]{Ibragimov71}). By the same argument, with the same choice of $\delta$ as for $u_{j,1}$, it also follows for $u_{j,2}$ and $u_{j,3}$ (using once again Ibragimov's theorem).
\end{itemize}
\end{proof}
%
\begin{proof}{\bf of Corollary~\ref{cor:ARMA_pq_asympt_qn}.}
Comparing the conditions of Corollary~\ref{cor:ARMA_pq_asympt_qn} with Theorem~\ref{thm:biv_FCLT} or Corollary~\ref{cor:biv_FCLT_abs_mixing} respectively, we simply need to show why the causality (condition \eqref{eq:causal}), 
and $X\sim (ARMA-\phi)$ 
or $(ARMA-\beta)$, 
are sufficient for the $L_2$-NED of $(X_0)$ and $\lvert X_t^r \rvert$  with constants $\nu(k)= O(k^{-s}), s>6$, and $\nu(k) = O(k^{-\gamma})$, $\gamma >2$, respectively. 
For $X_0$, this follows directly by \cite[Lemma~3.1]{Qiu11} (by their result, a causal ARMA($p$,$q$) is geometrically $L_2$-NED). For $(\lvert X_0 \rvert^r)$, the geometric $L_2$-NED has been exactly established through Lemma~\ref{lemma:ARMA_NED_Xp}.
Finally, as geometric $L_2$-NED implies a rate of $\nu(k)= O(k^{-s}), s>6$, the necessary rate for the application of Theorem~\ref{thm:biv_FCLT} and Corollary~\ref{cor:biv_FCLT_abs_mixing}, respectively, is attained.
\end{proof}

\subsection{Proofs of Auxiliary Results} \label{ssec:aux_res_proofs}

We start by establishing the asymptotics of the $r$-th absolute centred sample moment (Proposition~\ref{prop:Abs_central_mom_asympt_garch}). To do so, we need the following lemma, which extends Lemma 2.1 in \cite{Segers14} (case $v=1$) to any moment $v \in \N$, as well as the iid case presented in Lemma~A.1 in \cite{Brautigam22}. 
\begin{lemma} \label{lemma:segers_modif_garch}
Consider a stationary and ergodic process $(X_n, n \geq 0)$ with `short-memory', i.e. \\
$\sum_{i=0}^{\infty} \lvert \Cov(X_0, X_i) \rvert < \infty$. 
Then, for $v=1$ or $2$, given that the 2nd moment of $X_0$ exists, or, for any integer $v > 2$, given that the $v$-th moment of $X_0$ exists, it holds that, as $n \rightarrow \infty$, 
\begin{eqnarray} \label{eq:segers_lemma_eq_garch}
&&\frac{1}{n} \sum_{i=1}^n (X_i - \mu)^{v} \left( \lvert X_i - \bar{X}_n \rvert - \lvert X_i - \mu \rvert \right) = \nonumber\\
&&(\bar{X}_n - \mu) \times \E[(X_0 - \mu )^v \sgn(\mu-X_0)] + o_P(1/\sqrt{n}).
\end{eqnarray}
\end{lemma}
\begin{proof}{\bf of Lemma~\ref{lemma:segers_modif_garch} and of Proposition~\ref{prop:Abs_central_mom_asympt_garch}.}
  The proofs of Lemma~\ref{lemma:segers_modif_garch} and Proposition~\ref{prop:Abs_central_mom_asympt_garch} follow the lines of their equivalents in the iid case; see the proof of Lemma~A.1 and Proposition~A.1 in \cite{Brautigam22}.
  In the dependent case, it needs to be adapted, 
  using the stationarity, ergodicity and short-memory of the process. By these three properties, it follows that $\sqrt{n} \lvert \bar{X}_n - \mu \rvert^{v+1} \underset{n \rightarrow \infty}{\overset{P} \rightarrow} 0$ for any integer $v \geq 1$. 
  Further, 
  we use the ergodicity of the process, instead of the strong law of large numbers, to conclude that
  \[ \frac{1}{n} \sum_{i=1}^n (X_i - \mu)^v \sgn(\mu-X_i) \underset{n \rightarrow \infty}{\overset{a.s.}\rightarrow} \E[(X_0 - \mu)^v \sgn(\mu-X_0)]. 
  \]
  Finally, for the proof of Proposition~\ref{prop:Abs_central_mom_asympt_garch},we apply Lemma~\ref{lemma:segers_modif_garch} instead of its counterpart in the iid case (\cite[Lemma~A.1]{Brautigam22}).
  \end{proof}
\begin{proof}{\bf of Lemma~\ref{lemma:L2NED_bd_fct}.}

(i) First, let us observe that
\begin{align*}
 \E[ \lvert g(X_0) - E[g(X_0) \vert \mathcal{F}_{-k}^{+k}] \rvert^2 ] &\leq \E[ \lvert g(X_0) - g(E[X_0 \vert \mathcal{F}_{-k}^{+k}]) \rvert^2 ]
\\ &= \E[ \lvert g(X_0) - g(E[X_0 \vert \mathcal{F}_{-k}^{+k}]) \rvert^2 \1_{ \left( \lvert X_0 - \E[X_0 \vert \mathcal{F}_{-k}^{+k}] \rvert \leq \sqrt{\nu(k)} \right) } ] \numberthis \label{eq:part1}
\\ &\quad + \E[ \lvert g(X_0) - g(E[X_0 \vert \mathcal{F}_{-k}^{+k}]) \rvert^2 \1_{\left( \lvert X_0 - \E[X_0 \vert \mathcal{F}_{-k}^{+k}] \rvert \geq \sqrt{\nu(k)} \right) } ], \numberthis \label{eq:part2}
\end{align*}
where the first inequality follows from the fact that 
$\E[\left(X- \E[X \vert Z]\right)^2] \leq \E[\left( X - h(Z) \right)^2]$ for any measurable function $h$, in particular for $h=g$. 
For \eqref{eq:part1}, we obtain by using the 2-variation condition,
\begin{eqnarray*}
&&\E[ \lvert g(X_0) - g(E[X_0 \vert \mathcal{F}_{-k}^{+k}]) \rvert^2 \1_{ \left(  \lvert X_0 - \E[X_0 \vert \mathcal{F}_{-k}^{+k}] \rvert \leq \sqrt{\nu(k)} \right) } ]\\
 &&\leq \; \E[ \sup_{x \in \R: \lvert x -X_0 \rvert  \leq \sqrt{\nu(k)} } \lvert g(X_0) - g(x)  \rvert^2] 
\;\leq \; L  \nu(k).
\end{eqnarray*}
For \eqref{eq:part2}, using the boundedness of $g$ by $K$, we have
{\small
$$
\E[ \lvert g(X_0) - g(E[X_0 \vert \mathcal{F}_{-k}^{+k}]) \rvert^2 \1_{ \lvert X_0 - \E[X_0 \vert \mathcal{F}_{-k}^{+k}] \rvert \geq \sqrt{\nu(k)}} ] 
\leq 4 K^2 \P \left( \lvert X_0 - \E[X_0 \vert \mathcal{F}_{-k}^{+k}] \rvert \geq \sqrt{\nu(k)} \right).
$$
}
Then, by the extended version of the Markov inequality and the fact that, by assumption, $(X_0)$ is $L_2$-NED with constant $\nu(k)$, we obtain
\[ \P \left( \lvert X_0 - \E[X_0 \vert \mathcal{F}_{-k}^{+k}] \rvert \geq \sqrt{\nu(k)} \right) \leq  \frac{\| X_0 - \E[X_0 \vert \mathcal{F}_{-k}^{+k}] \|_2^2}{\nu(k)} \leq  \nu(k), \]
from which we deduce that \eqref{eq:part2} can be bounded by $4K^2 \nu(k)$. 
Hence, we can conclude that
\[ \E[ \lvert g(X_0) - E[g(X_0) \vert \mathcal{F}_{-k}^{+k}] \rvert^2 ] \leq \eqref{eq:part1} + \eqref{eq:part2} \leq \left( L + 4K^2 \right) \nu(k) \]
\[ \text{i.e.~} \|  \lvert g(X_0) - E[g(X_0) \vert \mathcal{F}_{-k}^{+k}] \|_2 \leq \sqrt{ \left( L + 4K^2 \right) \nu(k) }. \]
%
(ii) We follow the steps of the proof of (i) 
except that for \eqref{eq:part1}, we use the specificities of the indicator function $g(x) := \1_{\left( x \leq t \right)}$ ($t \in \R$) and use the 1-variation condition (instead of the 2-variation condition) to conclude the same bound:
\begin{eqnarray*}
&&\E[ \lvert g(X_0) - g(E[X_0 \vert \mathcal{F}_{-k}^{+k}]) \rvert^2 \1_{ \left(  \lvert X_0 - \E[X_0 \vert \mathcal{F}_{-k}^{+k}] \rvert \leq \sqrt{\nu(k)} \right) } ]\\
 &&\leq \; \E[ \sup_{x \in \R: \lvert x -X_0 \rvert  \leq \sqrt{\nu(k)} } \lvert g(X_0) - g(x)  \rvert^2] 
= \; \E[ \sup_{x \in \R: \lvert x -X_0 \rvert  \leq \sqrt{\nu(k)} } \lvert g(X_0) - g(x)  \rvert ] 
\;\leq \; L  \sqrt{\nu(k)} \;\leq \; L  \nu(k).
\end{eqnarray*}
The next steps remain the same as in (i); we can thus conclude to \eqref{eq:CS-L2Ned}.
\end{proof}
\begin{proof} {\bf of Lemma~\ref{lemma:NED_for_polys_general}.}
We show the $L_2$-NED of $(f(X_n))_{n \in \Z}$ by directly estimating the constants. We start with the expression $\| f(X_n) - \E[ f(X_n) \vert \mathcal{F}_{-k}^{+k}] \|_2^2$ and comment line by line on the inequalities we use. 
\\By the definition of conditional expectation as minimizer in $L_2$ norm, and then the inequality $\left(|a| - |b| \right)^2 \leq \lvert a^2 - b^2 \rvert$, for $ a,b \in \R$, we can write
\begin{eqnarray}
\| f(X_n) - \E[ f(X_n) \vert \mathcal{F}_{-k}^{+k}] \|_2^2 &\leq& \| f( X_n) - f( \E[  X_n \vert \mathcal{F}_{-k}^{+k}])  \|_2^2 = \E[ \lvert f(X_n) - f(\E[  X_n \vert \mathcal{F}_{-k}^{+k}]) \rvert^2] \notag
\\ &\leq & \E \left\lvert f^2(X_n) - f^2(\E[  X_n \vert \mathcal{F}_{-k}^{+k}]) \right\rvert. \label{eq:RHS1} 
\end{eqnarray}
Denoting $g=f^2$ and using the mean value theorem, $g(x)- g(y) = g'(c) (x-y), ~\text{for~} c= ax + (1-a)y \text{~with~} a \in [0,1]$, we can bound the RHS of \eqref{eq:RHS1} as
$$
    \E \left\lvert g(X_n) - g(\E[  X_n \vert \mathcal{F}_{-k}^{+k}]) \right\rvert 
  \leq \E \left\lvert  g' \left( a X_n + (1-a) \E[X_n \vert \mathcal{F}_{-k}^{+k}] \right) \left( X_n - \E[ X_n \vert \mathcal{F}_{-k}^{+k}] \right) \right\rvert 
$$ 
hence, after noticing that $g' = 2f f'$ is an increasing function and using the triangle inequality for the absolute value, it comes
\begin{equation}\label{eq:RHS2}
  \E \left\lvert g(X_n) - g(\E[  X_n \vert \mathcal{F}_{-k}^{+k}]) \right\rvert \leq \E [\lvert  g' \left(  a\lvert X_n \rvert + (1-a) \lvert \E[X_n \vert \mathcal{F}_{-k}^{+k}] \rvert \right) \rvert  ~~ \lvert  X_n - \E[ X_n \vert \mathcal{F}_{-k}^{+k}] \rvert ]. 
\end{equation}
Now, 
applying Jensen's inequality (as $g'$ is convex on $\R^{+}$ by assumption), then, H\"{o}lder's one, and finally the triangle inequality, the RHS of \eqref{eq:RHS2} can be bounded by 
\begin{align}
& \E [ \lvert  a g'(\lvert X_n \rvert) + (1-a) g'(\lvert \E[X_n \vert \mathcal{F}_{-k}^{+k}] \rvert) \rvert ~~ \lvert X_n - \E[ X_n \vert \mathcal{F}_{-k}^{+k}]  \rvert]  \notag
\\ & 
\leq \| a g'(\lvert X_n \rvert) + (1-a) g'(\lvert \E[X_n \vert \mathcal{F}_{-k}^{+k}]\rvert) \|_p  ~~\| X_n - \E[ X_n \vert \mathcal{F}_{-k}^{+k}] \|_q \notag
\\ & 
\leq \left( a \| g'(\lvert X_n \rvert ) \|_p + (1-a) \| g'(\lvert \E[X_n \vert \mathcal{F}_{-k}^{+k}] \rvert) \|_p \right) \| X_n - \E[ X_n \vert \mathcal{F}_{-k}^{+k}] \|_q . \label{eq:RHS3}
\end{align}
For the last step, notice that the composed function $g'(\lvert \cdot \rvert)^p$ is convex: As $(g')^p$ is non-decreasing for $x \geq 0$, 
it holds by the convexity of the absolute value function:
\begin{align*}
\left( g'(\lvert \alpha x + (1-\alpha) y \rvert) \right)^p &\leq \left( g'(\alpha \lvert x \rvert + (1-\alpha) \lvert y \rvert) \right)^p
\leq \alpha \left( g'(\lvert x \rvert) \right)^p + (1-\alpha) \left( g'(\lvert y \rvert) \right)^p,
\end{align*}
where the second inequality follows by the convexity of $(g')^p$ on $\R^{+}$, which also holds by assumption.
Consequently, we can apply Jensen's inequality for conditional expectations to this compound function and obtain an equivalent expression for the RHS of \eqref{eq:RHS3}:
{\small
\begin{align*}
\left( a \| g'(\lvert X_n \rvert) \|_p + (1-a) \| g'(\lvert X_n \rvert) \|_p \right) \| X_n - \E[ X_n \vert \mathcal{F}_{-k}^{+k}] \|_q 
= \| g'(\lvert X_n \rvert) \|_p  \| X_n - \E[ X_n \vert \mathcal{F}_{-k}^{+k}] \|_q  .
\end{align*}
}
Thus, combining all these calculations and inequalities, we can conclude that
\[ \| f(X_n) - \E[ f(X_n) \vert \mathcal{F}_{-k}^{+k}] \|_2 \leq \sqrt{ \| g'(\lvert X_n \rvert) \|_p  \| X_n - \E[ X_n \vert \mathcal{F}_{-k}^{+k}] \|_q } \leq  O(\nu^{1/2} (k)), \]
where the last inequality holds using the assumptions $L_q$-NED of $(X_n)_{n \in \Z}$ and the $L_p$-boundedness of $g'(\lvert X_n \rvert)$.
\end{proof}
\begin{proof}{\bf of Lemma~\ref{cor:NED_for_polys}.}
Consider the function $f$ defined by $f(x) = \lvert x \rvert^r$.
We have $f'(x) = r \lvert x \rvert^{r-1} \sgn(x)$ and $f''(x) = r (r-1) \lvert x \rvert^{r-2}$. Thus, we can verify the conditions of Lemma~\ref{lemma:NED_for_polys_general}, as $f$ is positive and convex ($f'' \geq 0$),
$f\times f'$ is positive for $x\ge 0$ and also convex for $x \ge 0$ (since $f f'(x)= r x^{2r-1}$ and $(f f')'' = r (2r-1) (2r-2) x^{2r-3}$ for  $r \neq 1$ (and 0 for $r=1$)). Finally, $(f f')^p$ is convex for $x \geq 0$, for a choice of $p$. Indeed, as $(f f')^p = r x^{p (2r-1)}$, we get $((f f')^p)'' = r p^2 (2r-1) (2r-2) x^{p (2r-1) -2}$ for $r \neq \frac{p+1}{2p}$ (and equal to $0$ for $r= \frac{p+1}{2p}$) -   which is, for all choices of $p \in (1, \infty)$, positive for $x\geq 0$. 
\end{proof}
\begin{proof}{\bf of Lemma~\ref{lemma:ARMA_NED_Xp}.}
The proof consists of three steps.
First, the process $X$ being a causal ARMA($p$, $q$)-process, we can apply Theorem 3.1 from \cite{Qiu11}. Choosing $p=2$ therein, we can conclude that $X$ is strong $L_2$-NED with rate $\nu(k)= \rho^k$, for some $0< \rho <1$, hence it is $L_2$-NED with the same rate (compare Definitions~1.1 and 1.2 in \cite{Qiu11}).
Second, since $X$, equivalently $(X_0)$, is $L_2$-NED, 
we can apply Lemma~\ref{cor:NED_for_polys}. As it holds by assumption that $\lvert X_0 \rvert^{2r}< \infty$, we choose $q=2r$ and accordingly $p= \frac{2r}{2r-1}$. 
Then, by Lemma~\ref{cor:NED_for_polys}, it holds that $(\lvert X_0 \rvert^r)$ is geometrically $L_2$-NED if $(X_0)$ is geometrically $L_{2r}$-NED.
Finally, let us prove that this sufficient condition holds.  
For this, recall the representation of an ARMA process (which holds for an ARMA($p$, $q$) process satisfying \eqref{eq:ARMA_pq}, \eqref{eq:causal}, see Lemma 3.1 in \cite{Qiu11}):
\begin{equation}\label{eq:Qiu11}
X_t = \sum_{j=0}^{\infty} \psi_j \epsilon_{t-j}, \text{~with~} \lvert \psi_j \rvert = O(\rho^j) \text{~for some~} 0< \rho <1.
\end{equation}
Then, we apply a standard truncation argument (as e.g. done in \cite{Lee14}, Lemma 1 for augmented GARCH($p$, $q$) processes):
Define the truncated variable $h_{t,m} = \sum_{j=0}^m \psi_j \epsilon_{t-j}$, which, by construction, is $\mathcal{F}_{\epsilon,t-m,t+m}$-measurable. Let $h_{t,m}^{*} := X_t - h_{t,m}$.
For any given integer $r>0$, let us now verify the $L_{2r}$-NED of $(X_0)$: 
$$
\| X_0 - \E[ X_0 \vert \mathcal{F}_{\epsilon, -m, +m}] \|_{2r}^{2r}  = \| h_{0,m}^{*} - \E[ h_{0,m}^{*} \vert \mathcal{F}_{\epsilon, -m, +m}] \|_{2r}^{2r} \leq 2^{1+2r} \|  h_{0,m}^{*} \|_{2r}^{2r},
$$
where the first equality follows from the $\mathcal{F}_{\epsilon,-m,+m}$-measurability of $h_{0,m}$ and the subsequent inequality because $\E[\lvert X - \E[X \vert Y] \rvert^{2r}] \leq 2^{(1+2r)} \E[\lvert X \rvert^{2r}]$ 
for any random variables $X,Y$ and $r\geq 1$. 

Thus, it is enough to prove the geometric $L_{2r}$-NED of $ h_{0,m}^{*}$ to conclude the proof. Since we can write
$$
\| h_{0,m}^{*} \|_{2r}  = \| \sum_{j=m+1}^{\infty}  \psi_j \epsilon_{-j}  \|_{2r}
 \leq \sum_{j=m+1}^{\infty}  \|  \psi_j \epsilon_{-j}  \|_{2r} 
= \sum_{j=m+1}^{\infty}  \lvert \psi_j \rvert \|  \epsilon_{-j}  \|_{2r} 
= \|  \epsilon_{0}  \|_{2r}  \sum_{j=m+1}^{\infty}  \lvert \psi_j \rvert,
$$
where $\epsilon_0$ has a finite second moment by definition of the ARMA($p$, $q$) process, then, using \eqref{eq:Qiu11}, we obtain that,
for a constant $C>0$,
$$
\| h_{0,m}^{*} \|_{2r}  \leq \|  \epsilon_{0}  \|_{2r}  \sum_{j=m+1}^{\infty}  \lvert \psi_j \rvert  \leq C \sum_{j=m+1}^{\infty} \rho^j 
= C \rho^{m+1} \sum_{j=0}^{\infty} \rho^j = C \frac{\rho^{m+1}}{1-\rho} 
= O(e^{-m \, \eta}),
$$
for $\eta = -\log{(\rho)}$, hence the result. 
\end{proof}

\section*{Financial disclosure}

None reported.

\section*{Conflict of interest}

The authors declare no potential conflict of interests.


\small
\bibliography{art-mixing_dependence-2024nov}
\bibliographystyle{acm}

\begin{appendix}
  \section*{Examples of Augmented GARCH models}\label{appn} 

First, let us give a schematic overview of the nesting of some augmented GARCH($p$,$q$) models in Figure~\ref{fig:GARCH_nested}, with a brief description of the acronyms, authors, and relations between these processes. Then, we explicitly state the conditions on the moments and parameters of these processes, for the bivariate asymptotics of Corollary~\ref{thm:augm_GARCH_pq_asympt_qn} to be valid, in view of applications.
\begin{figure}[h]
  %
  \centering
  \includegraphics[scale=0.5]{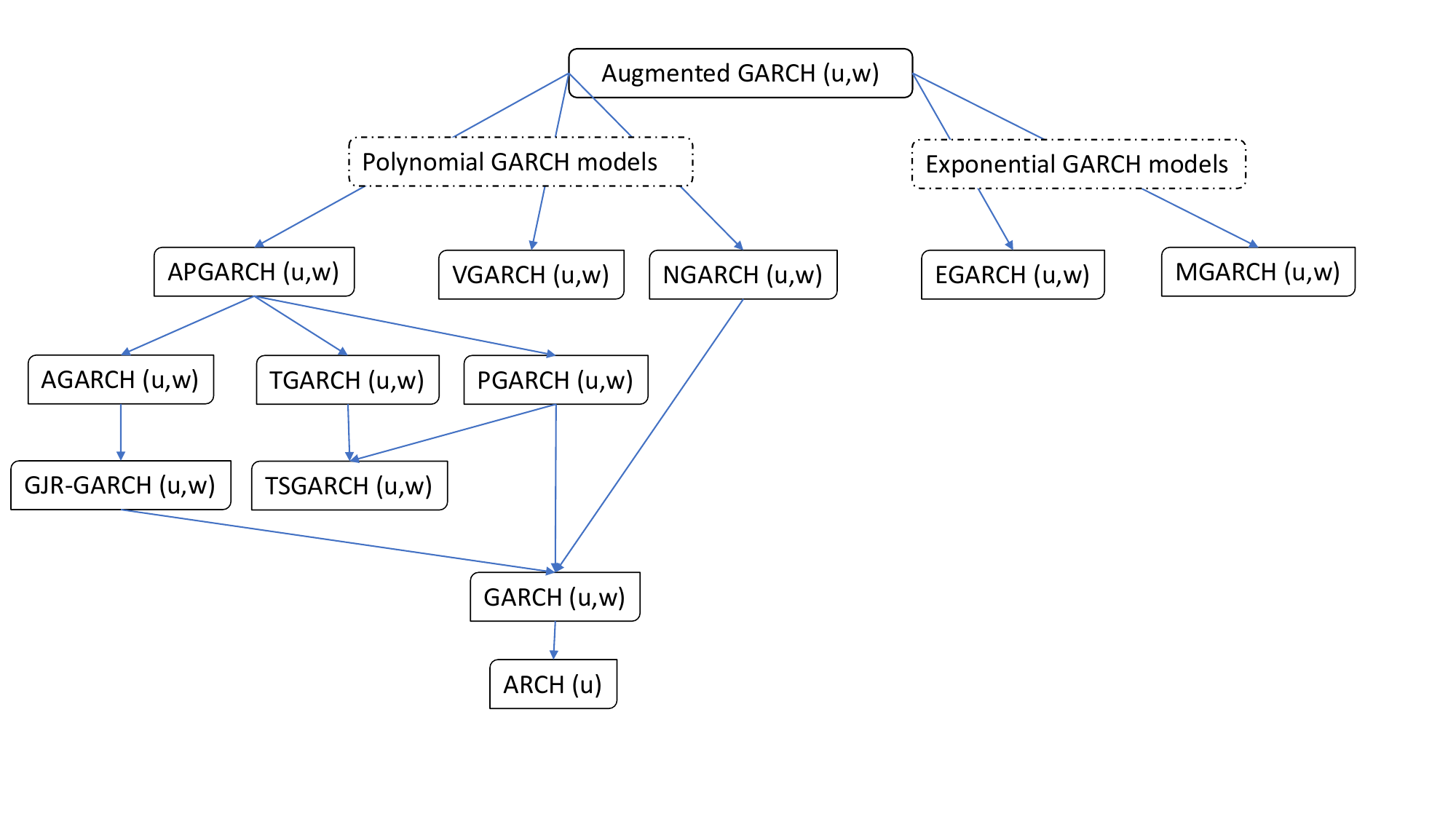}
  \vspace{-10ex}
  \caption{\label{fig:GARCH_nested}\sf\small Schematic overview of the nesting of some augmented GARCH($p$,$q$) models.}
  \end{figure} 

The restrictions on the parameters, if not specified differently, are $\omega \geq0, \alpha_i \geq0, -1 \leq\gamma_i \leq 1, \beta_j \geq 0$ for $i=1,...,p$, $j=1,...,q$.
\begin{itemize}
\item APGARCH: Asymmetric power GARCH, introduced by Ding et al. in \cite{Ding93}. One of the most general polynomial GARCH models.
\item AGARCH: Asymmetric GARCH, defined also by Ding et al. in \cite{Ding93}, choosing $\delta =1$ in APGARCH.
\item GJR-GARCH: This process is named after its three authors Glosten, Jaganathan and Runkle and was defined by them in \cite{Glosten93}. For the parameters $\alpha_i^{\ast} , \gamma_i^{\ast}$ it holds that $\alpha_i^{\ast} = \alpha_i (1-\gamma_i)^2$ and $\gamma_i^{\ast}= 4 \alpha_i \gamma_i$.
\item GARCH: Choosing all $\gamma_i=0$ in the AGARCH model (or $\gamma_i^{\ast} = 0$ in the GJR-GARCH), gives back the well-known GARCH($p$,$q$) process by Bollerslev in \cite{Bollerslev86}.
\item ARCH: Introduced by Engle in \cite{Engle82}. We recover it by setting all $\gamma_i =\beta_j=0, \forall i,j$.
\item TGARCH: Choosing $\delta=1/2$ in the APGARCH model leads us the so called threshold GARCH (TGARCH) by Zakoian in \cite{Zakoian94}. For the parameters $\alpha_i^{+} , \alpha_i^{-}$ it holds that $\alpha_i^{+} = \alpha_i (1-\gamma_i), \alpha_i^{-}= \alpha_i (1+ \gamma_i)$.
\item TSGARCH: Choosing $\gamma_i=0$ in the TGARCH model we get, as a subcase, the TSGARCH model, named after its authors, i.e. Taylor, \cite{Taylor86}, and Schwert, \cite{Schwert89}. 
\item PGARCH: Another subfamily of the APGARCH processes is the Power-GARCH (PGARCH), also called sometimes NGARCH (i.e. non-linear GARCH) due to Higgins and Bera in \cite{Higgins92}.
\item VGARCH: The volatility GARCH (VGARCH) model by Engle and Ng in \cite{Engle93} is also a polynomial GARCH model but is not part of the APGARCH family.
\item NGARCH: This non-linear asymmetric model is due to Engle and Ng in \cite{Engle93},  
and sometimes also called NAGARCH. 
\item MGARCH: This model is called multiplicative or logarithmic GARCH and goes back to independent suggestions, in slightly different formulations, of Geweke in
\cite{Geweke86},  Pantula in \cite{Pantula86} and Milh{\o}j in \cite{Milhoj87}.
\item EGARCH: This model is called exponential GARCH, introduced by Nelson in \cite{Nelson91}.
\end{itemize}
Now, recalling the specific conditions mentioned in Corollary~\ref{thm:augm_GARCH_pq_asympt_qn}, note that the continuity and differentiability conditions, $(C_2^{~'})$, $(C_1^{+})$, each at $q_X(p)$, and $(C_0)$ at $0$ for $r=1$, remain the same for the whole class of augmented GARCH processes. Contrary to that, the moment condition $(M_r)$ imposes different restrictions on the parameters of the underlying process, depending on the given augmented GARCH($p$, $q$) process. To our knowledge, there exists no general result for the class of augmented GARCH($p$, $q$) describing the necessary conditions of the process for a given moment to exist: E.g. in \cite{Hormann08} the case of augmented GARCH($1$,$1$) processes is considered, see Corollary~1 and Proposition~1 therein, and in \cite{Ling02} the GARCH($p,q$) and APGARCH($p$,$q$), see Theorem~2.1 and Theorem~3.2 therein. Sufficient but not necessary conditions, which are easier to verify in practice, follow from results in \cite{Giraitis06} as mentioned in \cite{Lindner09}, Proposition~2.

We focus here on solely explaining how, depending on the specifications~\eqref{eq:augm_GARCH_pq} of the process, the conditions, $(P_{\max(1,r/\delta)})$ for polynomial GARCH or $(L_r)$ for exponential GARCH respectively, translate differently in the various examples.
\begin{table}[h]
  {\scriptsize
  \parbox{350pt}{\caption{\label{tbl:vol_formula_models} \sf\small  Presentation of the volatility equation \eqref{eq:augm_GARCH_pq} and the corresponding specifications of functions $g_i,c_j$ for selected augmented GARCH models.}}
  \vspace{-3ex}
  \begin{center}
  \hspace*{-2.0cm}
  \begin{tabular*}{500pt}{l | lll }
  \hline
  \\[-1.5ex]
   & standard formula for $\Lambda(\sigma_t^2)$ & corresponding specifications of $g_i,c_j$ in~\eqref{eq:augm_GARCH_pq} \\
  \hline\hline
  \\[-1.5ex]
  \\[-2ex] \textbf{Polynomial GARCH}& & & 
  \\[1.5ex]  ~~APGARCH family & $\displaystyle \sigma_t^{2 \delta} = \omega + \sum_{i=1}^p \alpha_i \left( \lvert y_{t-i} \rvert - \gamma_i y_{t-i} \right)^{2 \delta} + \sum_{j=1}^q \beta_j \sigma_{t-j}^{2 \delta}$ & \hspace*{-1.1cm}$g_i = \omega/p \;\text{and}\; c_j =  \alpha_j \left( \lvert \epsilon_{t-j} \rvert - \gamma_j \epsilon_{t-j} \right)^{2 \delta} + \beta_j $ 
  \\[1.5ex] \quad\quad AGARCH & $\displaystyle\sigma_t^{2} = \omega + \sum_{i=1}^p \alpha_i \left( \lvert y_{t-i} \rvert - \gamma_i y_{t-i} \right)^{2 } + \sum_{j=1}^q \beta_j \sigma_{t-j}^{2}$ & \quad \quad ~$c_j =   \alpha_j \left( \lvert \epsilon_{t-j} \rvert - \gamma_j \epsilon_{t-j} \right)^{2} + \beta_j $
  \\[1.5ex] \quad\quad GJR-GARCH & $\displaystyle\sigma_t^2 = \omega +  \sum_{i=1}^p \left( \alpha_i^{\ast} +\gamma_i^{\ast} \1_{(y_{t-i}<0)} \right)y_{t-i}^2 + \sum_{j=1}^q \beta_j \sigma_{t-j}^2 $ & \quad \quad ~$c_j = \beta_j + \alpha_j^{\ast} \epsilon_{t-j}^2 +\gamma_j^{\ast} \max(0,-\epsilon_{t-j})^2   $
  \\[1.5ex] \quad\quad GARCH & $\displaystyle\sigma_t^2 = \omega + \sum_{i=1}^p \alpha_i y_{t-i}^2 + \sum_{j=1}^q \beta_j \sigma_{t-j}^2$ & \quad \quad ~$c_j = \alpha_j \epsilon_{t-j}^2 +\beta_j $
  \\[1.5ex] \quad\quad ARCH & $\displaystyle\sigma_t^2 = \omega + \sum_{i=1}^p \alpha_i y_{t-i}^2$ & \quad \quad ~$c_j =  \alpha_j \epsilon_{t-j}^2 $
  \\[1.5ex] \quad\quad TGARCH & $\displaystyle\sigma_t = \omega + \sum_{i=1}^p \left( \alpha_i^{+} max(y_{t-i},0)  - \alpha_i^{-} min(y_{t-i},0) \right) + \sum_{j=1}^q \beta_j \sigma_{t-j}$ & \quad \quad ~$ c_j = \alpha_j \lvert \epsilon_{t-j} \rvert  - \alpha_j \gamma_j \epsilon_{t-j} + \beta_j  $
  \\[1.5ex] \quad\quad TSGARCH & $\displaystyle\sigma_t = \omega + \sum_{i=1}^p  \alpha_i \lvert y_{t-i} \rvert  + \sum_{j=1}^q \beta_j \sigma_{t-j}$ & \quad \quad ~$ c_j = \alpha_j \lvert \epsilon_{t-j} \rvert  + \beta_j $
  \\[1.5ex] \quad\quad PGARCH & $\displaystyle\sigma_t^{\delta} = \omega + \sum_{i=1}^p \alpha_i \lvert y_{t-i} \rvert^{\delta}  + \sum_{j=1}^q \beta_j  \sigma_{t-j}^{\delta}.$ & \quad \quad ~$c_j =  \alpha_j \lvert \epsilon_{t-j} \rvert^{\delta}  + \beta_j $
  \\[1.5ex] ~~VGARCH & $\displaystyle\sigma_t^2 = \omega + \sum_{i=1}^p \alpha_i (\epsilon_{t-i} + \gamma_i)^2 + \sum_{j=1}^q \beta_j \sigma_{t-j}^2.$ & $g_i = \omega/p + \alpha_i (\epsilon_{t-i} + \gamma_i)^2$ \; and \;$c_j = \beta_j$
  \\[1.5ex] ~~NGARCH & $\displaystyle\sigma_t^2 = \omega + \sum_{i=1}^p \alpha_i (y_{t-i} + \gamma_i \sigma_{t-i})^2  + \sum_{j=1}^q \beta_j \sigma_{t-j}^2 $ & $g_i = \omega/p$\; and\; $c_j =  \alpha_j (\epsilon_{t-j} + \gamma_j)^2 +  \beta_j $
  \\\hline
  \\[-1.5ex] \textbf{Exponential GARCH} & & $c_j = \beta_j$\; and
  \\[1.5ex] ~~MGARCH & $\displaystyle\log(\sigma_t^2) = \omega + \sum_{i=1}^p \alpha_i \log(\epsilon_{t-i}^2) + \sum_{j=1}^q \beta_j \log(\sigma_{t-j}^2)$ & ~~~~$g_i = \omega/p + \alpha_i \log(\epsilon_{t-i}^2)$
  \\[1.5ex] ~~EGARCH & $\displaystyle\log(\sigma_t^2) = \omega + \sum_{i=1}^p \alpha_i \left(  \lvert \epsilon_{t-i} \rvert - \E\lvert \epsilon_{t-i}\rvert \right) + \gamma_i \epsilon_{t-i}  + \sum_{j=1}^q \beta_j \log(\sigma_{t-j}^2)$ & ~~~~$g_i = \omega/p + \alpha_i (\lvert\epsilon_{t-i}\rvert - \E\lvert \epsilon_{t-i} \rvert) + \gamma_i \epsilon_{t-i}$
  \\  [1ex]
  \hline
  \end{tabular*}
  \end{center}
  }
  \end{table}

For this, we introduce in Table~\ref{tbl:vol_formula_models} a non-exhaustive selection of different augmented GARCH($p$,$q$) models, providing for each the corresponding volatility equation, \eqref{eq:augm_GARCH_pq}, and the specifications of the functions $g_i$ and $c_j$. 
We consider 10 models that belong to the group of polynomial GARCH ($\Lambda(x) = x^{\delta}$) and two examples of exponential GARCH ($\Lambda(x) = \log(x)$).  
Note that in Table~\ref{tbl:vol_formula_models} the specification of $g_i$ is the same for the whole APGARCH family (only the $c_j$ change), whereas for the two exponential GARCH models, it is the reverse.
The general restrictions on the parameters are as follows: $\omega>0, \alpha_i\geq 0 , -1 \leq \gamma_i \leq 1, \beta_j \geq 0$ for $i=1,...,p$, $j=1,...,q$.
Further, the parameters in the GJR-GARCH (TGARCH) are denoted with an asterix (with a plus or minus) as they are not the same as in the other models.

In Tables~\ref{tbl:cond_bivariate-conv-11} and~\ref{tbl:cond_bivariate-conv-pq}, we present how the conditions $(P_{\max(1,r/\delta)})$ or $(L_r)$ translate for each model.
Table~\ref{tbl:cond_bivariate-conv-11} treats the specific case of an augmented GARCH($p$,$q$) process with $p=q=1$, whereas Table~\ref{tbl:cond_bivariate-conv-pq} treats the general case for arbitrary $p\geq1, q \geq0$.
\vspace{-1ex}
\begin{table}[h]
{\scriptsize
\parbox{430pt}{\caption{\label{tbl:cond_bivariate-conv-11} \sf\footnotesize Conditions $(P_{\max{(1, r/\delta)}})$ or $(L_r)$ respectively translated for different augmented GARCH($1$,$1$) models. Left column for the general $r$-th absolute centred sample moment, middle for the MAD ($r=1$) and right for the variance ($r=2$).}}
\vspace{-4ex}
\begin{center}
\hspace*{-2cm}
\begin{tabular}{l | lll }
\hline
\\[-1.5ex]
augmented \\ GARCH ($1$, $1$) & $r \in \N$ & $r=1$ & $r=2$ \\
\hline\hline
\\[-1.5ex]
\\[-2ex] APGARCH &  $ \E[\lvert \alpha_1 \left( \lvert \epsilon_0\rvert - \gamma_1 \epsilon_{t-1}\right)^{2 \delta} + \beta_1 \rvert^r] <1$ & $\alpha_1 \E\left[\left( \lvert \epsilon_0\rvert - \gamma_1 \epsilon_{t-1} \right)^{2\delta } \right]+ \beta_1  <1$ & $ \E[\lvert \alpha_1 \left(  \lvert \epsilon_0\rvert - \gamma_1 \epsilon_{t-1} \right)^{2 \delta} + \beta_1 \rvert^2] <1$
\\[1.5ex] AGARCH & $ \E[\lvert \alpha_1 \left( \lvert \epsilon_0\rvert - \gamma_1 \epsilon_{t-1}\right)^{2 } + \beta_1 \rvert^r] <1$ & $\alpha_1 \E\left[\left( \lvert \epsilon_0\rvert - \gamma_1 \epsilon_{t-1} \right)^{2 } \right]+ \beta_1  <1$ & $ \E[\lvert \alpha_1 \left( \lvert \epsilon_0\rvert - \gamma_1 \epsilon_{t-1}\right)^{2} + \beta_1 \rvert^2]<1$
\\[1.5ex] GJR-GARCH & $\E[\lvert \alpha_1^{\ast} \epsilon_0^2 + \beta_1 + \gamma_1^{\ast} \max(0,-\epsilon_0^2) \rvert^r] <1$ & $\alpha_1^{\ast} + \beta_1 + \gamma_1^{\ast} \E[\max(0,-\epsilon_0)^2] <1$ &  $\E[\lvert \alpha_1^{\ast} \epsilon_0^2 + \beta_1 + \gamma_1^{\ast} \max(0,-\epsilon_0^2) \rvert^2] <1$
\\[1.5ex] GARCH &  $\E[(\alpha_1 \epsilon_0^2 + \beta_1)^r]< 1 $ & $ \alpha_1 + \beta_1 < 1$ & $\alpha_1^2 \E[\epsilon_0^4] + \alpha_1 \beta_1 + \beta_1^2 < 1 $ 
\\[1.5ex] ARCH &  $\alpha_1^r \E[\epsilon_0^{2r}]< 1 $ & $ \alpha_1 < 1$ & $\alpha_1^2 \E[\epsilon_0^4] < 1 $ 
\\[1.5ex] TGARCH &  $\E[\lvert \alpha_1 \lvert \epsilon_{t-1} \rvert - \alpha_1 \gamma_1 \epsilon_{t-1} + \beta_1 \rvert^r] <1 $&  $\alpha_1 \E\lvert \epsilon_{t-1} \rvert + \beta_1 < 1$ &  $\E[\lvert \alpha_1 \lvert \epsilon_{t-1} \rvert - \alpha_1 \gamma_1 \epsilon_{t-1} + \beta_1 \rvert^2] <1$
\\[1.5ex] TSGARCH &  $\E[\lvert \alpha_1 \lvert \epsilon_{t-1} \rvert + \beta_1 \rvert^r] <1 $&  $\alpha_1 \E\lvert \epsilon_{t-1} \rvert + \beta_1 < 1$ &  $\E[\lvert \alpha_1 \lvert \epsilon_{t-1} \rvert + \beta_1 \rvert^2] <1$  
\\[1.5ex] PGARCH & $\E[\lvert \alpha_1 \lvert \epsilon_0 \rvert +\beta_1\rvert^{2r}] < 1$ & $\alpha_1  + 2 \alpha_1 \beta_1 \E\lvert \epsilon_0 \rvert +\beta_1^2 < 1$ & $ \E[\lvert \alpha_1 \lvert \epsilon_0 \rvert +\beta_1\rvert^{4}] < 1$
\\[1.5ex] VGARCH & \multicolumn{3}{c}{for any $r \in \N$: \quad $\beta_1 < 1$}
\\[1.5ex] NGARCH &  $\E[\lvert \alpha_1 (\epsilon_0 + \gamma_1)^2 + \beta_1 \rvert^r] < 1$ & $ \alpha_1 (1 + \gamma_1^2) + \beta_1 < 1$&  $ \E[\lvert \alpha_1 (\epsilon_0 + \gamma_1)^2 + \beta_1 \rvert^2] < 1$
\\[1.5ex] MGARCH & \multicolumn{3}{c}{for any $r \in \N$: \quad $\E[\exp(4r \lvert \omega/p + \alpha_1 \log(\epsilon_0^2)  \rvert^2)] < \infty$ and $\lvert \beta_1 \rvert <1$}
\\[1.5ex] EGARCH & \multicolumn{3}{c}{for any $r \in \N$: \quad $\E[\exp(4r  \lvert \omega/p + \alpha_1 (\lvert\epsilon_0\rvert - \E\lvert \epsilon_0 \rvert) + \gamma_1 \epsilon_0 \rvert^2)] < \infty$ and $\lvert \beta_1 \rvert <1$}
\\  [1ex]
\hline 
\end{tabular}
\end{center}
}
\end{table}
%
\begin{table}[h]
{\scriptsize
\caption{\label{tbl:cond_bivariate-conv-pq} \sf\footnotesize  Conditions $(P_{\max{(1, r/\delta)}})$ or $(L_r)$ respectively translated for different augmented GARCH($p$,$q$) models for the general $r$-th absolute centred sample moment, $r \in \N$.}
\vspace{-2ex}
\begin{center}
\begin{tabular}{l | l }
\hline
\\[-1.5ex]
augmented \\ GARCH ($p$,$q$) & $r \in \N$  \\
\hline\hline
\\[-1.5ex]
\\[-2ex] APGARCH &  $\sum_{j=1}^{max(p,q)} \E[\lvert \alpha_j \left(\lvert \epsilon_0\rvert - \gamma_j \epsilon_{t-j}\right)^{2 \delta} + \beta_j \rvert^r]^{1/r} <1$ 
\\[1.5ex] AGARCH & $\sum_{j=1}^{max(p,q)} \E[\lvert \left( \alpha_j \lvert \epsilon_0\rvert - \gamma_j \epsilon_{t-j}\right)^{2 } + \beta_j \rvert^r]^{1/r} <1$ 
\\[1.5ex] GJR-GARCH & $\sum_{j=1}^{max(p,q)} \E[\lvert \alpha_j^{\ast} \epsilon_0^2 + \beta_j + \gamma_j^{\ast} \max(0,-\epsilon_0^2) \rvert^r]^{1/r} <1$
\\[1.5ex] GARCH & $\sum_{j=1}^{\max(p,q)} \E[(\alpha_j \epsilon_0^2 + \beta_j)^r]^{1/r} < 1 $ 
\\[1.5ex] ARCH & $\sum_{j=1}^{\max(p,q)} \alpha_j \E[ \epsilon_0^{2r}]^{1/r} < 1 $ 
\\[1.5ex] TGARCH & $\sum_{j=1}^{\max(p,q)} \E[\lvert \alpha_j \lvert \epsilon_{t-j} \rvert - \alpha_j \gamma_j \epsilon_{t-j} + \beta_j \rvert^r]^{1/r} < 1$
\\[1.5ex] TSGARCH & $\sum_{j=1}^{\max(p,q)} \E[\lvert \alpha_j \lvert \epsilon_{t-j} \rvert + \beta_j \rvert^r]^{1/r} < 1$
\\[1.5ex] PGARCH & $\sum_{j=1}^{max(p,q)} \E[\lvert \alpha_j \lvert \epsilon_0 \rvert +\beta_j\rvert^{2r}]^{1/(2r)} < 1$ 
\\[1.5ex] VGARCH & $\sum_{j=1}^q \beta_j < 1$
\\[1.5ex] NGARCH & $\sum_{j=1}^{\max(p,q)} \E[\lvert \alpha_j (\epsilon_0 + \gamma_j)^2 + \beta_j \rvert^r]^{1/r} < 1$ 
\\[1.5ex] MGARCH & $\E[\exp(4r \sum_{i=1}^p  \lvert \omega/p + \alpha_i \log(\epsilon_0^2)  \rvert^2)] < \infty$ and $\sum_{j=1}^q \lvert \beta_j \rvert <1$
\\[1.5ex] EGARCH & $\E[\exp(4r \sum_{i=1}^p  \lvert \omega/p + \alpha_i (\lvert\epsilon_0\rvert - \E\lvert \epsilon_0 \rvert) + \gamma_i \epsilon_0 \rvert^2] < \infty$ and $\sum_{j=1}^q \lvert \beta_j \rvert <1$
\\  [1ex]
\hline
\end{tabular}
\end{center}
}
\end{table}

In Table~\ref{tbl:cond_bivariate-conv-11}, we consider in the first column the conditions for the general $r$-th absolute centred sample moment, $r \in \N$.
We also specifically look at the standard cases of the sample MAD ($r=1$) and the sample variance ($r=2$) as measure of dispersion estimators respectively, presented in the second and third column.

For the selected polynomial GARCH models, the requirement ${\sum_{i=1}^p \| g_i(\epsilon_0) \|_{\max(1,r/\delta)} < \infty}$ in condition $(P_{\max(1,r/\delta)})$ will always be fulfilled. Thus, we only need to analyse the condition\\
$\displaystyle {\sum_{j=1}^q \| c_j (\epsilon_0)  \|_{\max(1,r/\delta)} < 1}$.

Lastly, we present in Table~\ref{tbl:cond_bivariate-conv-pq} how the conditions $(P_{\max{(1, r/\delta)}})$ or $(L_r)$ respectively translate for those augmented GARCH($p$,$q$) processes - this is the generalization of Table~\ref{tbl:cond_bivariate-conv-11}.
As, in contrast to Table~\ref{tbl:cond_bivariate-conv-11}, we do not gain any insight by considering the choices of $r=1$ or $r=2$, we only present the general case, $r \in \N$.

When $p \neq q$, we need to consider coefficients $\alpha_j ,\beta_j, \gamma_j$ for $j=1,...,\max{(p,q)}$. In case they are not defined, we set them equal to 0. 

Note that, in Table~\ref{tbl:cond_bivariate-conv-11} (and also Table~\ref{tbl:cond_bivariate-conv-pq}), the restrictions on the parameter space, given by $(P_{\max(1,r/\delta)})$ or $(L_r)$ respectively, are the same as the conditions for univariate FCLTs of the process $X_t^r$ itself (see \cite{Berkes08}, \cite{Hormann08}). For $r=1$, they coincide with the conditions for e.g. $\beta$-mixing with exponential decay (see \cite{Carrasco02}).
  \end{appendix}

\end{document}